\documentclass{article}

\input{hat-partition.sty}
\input{hat-macros.sty}

\title{A construction of the hat tilings by a Markov partition}

\author{Sébastien Labbé\footnote{CNRS – Université de Montréal CRM-CNRS, Montréal, Canada, \mailto{sebastien.labbe@cnrs.fr}}
  ~and
  Peter Selinger\footnote{Dalhousie University, Halifax, Canada, \mailto{selinger@mathstat.dal.ca}}}

\date{}

\begin{document}

\maketitle

\begin{abstract}
  We present a simple construction of hat tilings. The construction
  can be carried out by superimposing a triangular grid on a specially
  colored image and reading off the orientation of the tiles. We show
  that our construction produces valid hat tilings, and conversely,
  in an appropriate sense that is made precise in the paper, that
  every valid hat tiling can be obtained in this way.
\end{abstract}

\section{Non-technical overview of the construction}

The purpose of this short paper is to present a simple construction of
hat tilings \cite{MR4770585}. To keep the construction accessible to
non-specialists, this introductory section is deliberately written
without technical jargon. More mathematical details are given in the
later parts of this paper.

The \emph{hat tile} is the following shape. Because it is composed of
8 identical kite-shaped pieces (shown in light gray outlines), it is
an example of a shape called a \emph{polykite}.
\[
\begin{tikzpicture}[scale=2,rotate=180]
  \draw[thin,black!20] (0,0) -- (-1,0) -- (-0.5,-0.866025) -- cycle;
  \draw[thin,black!20] (0,0) -- (-0.25,0.433013);
  \draw[thin,black!20] (-0.5,0) -- (-0.5,-0.288675) -- (-0.75,-0.433013);
  \draw[thin,black!20] (-0.5,0) -- (-0.5,-0.288675) -- (0,-0.577350);
  \draw[very thick] (0,0) \smithpath;
  \node[whitedot] at (0,0) {};
\end{tikzpicture}
\]
Smith, Myers, Kaplan and Goodman-Strauss {\cite{MR4770585}} discovered that the hat
tile has a remarkable property: while it is possible to tile the
infinite plane with copies of the hat tile and its mirror image (called anti-hat 
\cite{zbMATH08135691}), such tilings can never be periodic. 
A small section of a hat tiling is shown in Figure~\ref{fig:sample-tiling}(a).

\begin{figure}
\begin{center}
    \begin{tikzpicture}[
            scale=1.5,
        ]

        \begin{scope}
        \smith{1.2*1.000}{1.2*0}     {+1}
        \smith{1.2*0.500}{1.2*0.866025}{+2}
        \smith{1.2*-0.50}{1.2*0.866025}{+3}
        \smith{1.2*-1.00}{1.2*0}     {+4}
        \smith{1.2*-0.50}{1.2*-0.866025}{+5}
        \smith{1.2*0.500}{1.2*-0.866025}{+6}
        \end{scope}

        \begin{scope}[xshift=6cm]
        \smith{1.2*1.000}{1.2*0}     {-1}
        \smith{1.2*0.500}{1.2*0.866025}{-2}
        \smith{1.2*-0.50}{1.2*0.866025}{-3}
        \smith{1.2*-1.00}{1.2*0}     {-4}
        \smith{1.2*-0.50}{1.2*-0.866025}{-5}
        \smith{1.2*0.500}{1.2*-0.866025}{-6}
        \end{scope}

    \end{tikzpicture}
\end{center}
\caption{The 12 orientations of the hat tile. To each tile
  orientation, we assign a unique color and a label in the set
  $\s{+1,+2,+3,+4,+5,+6,-1,-2,-3,-4,-5,-6}$.}
\label{fig:anchored-tiles}
\end{figure}
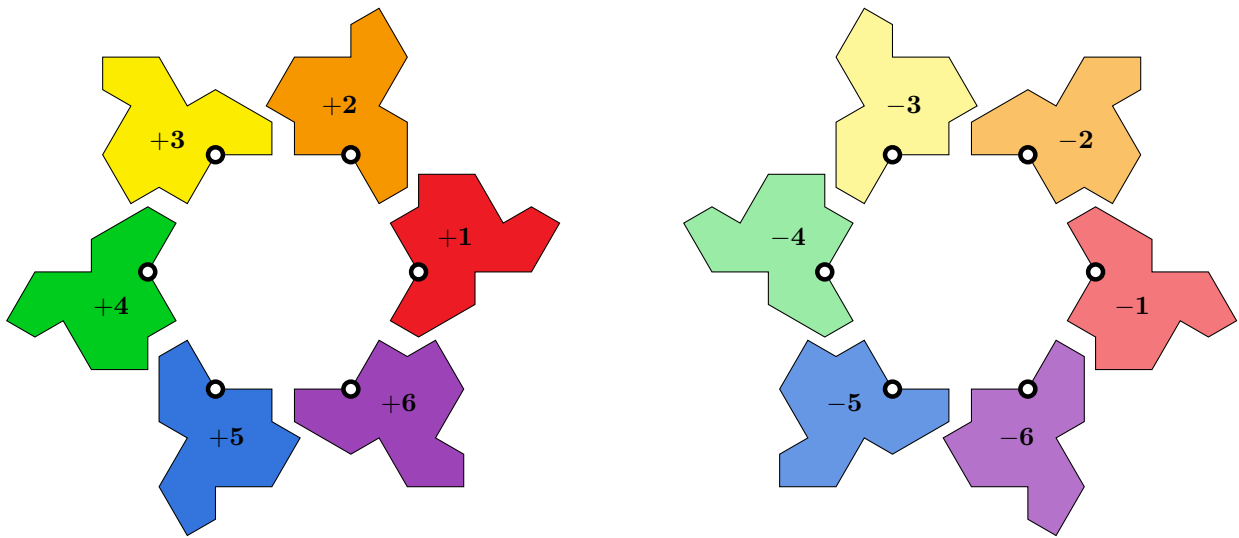
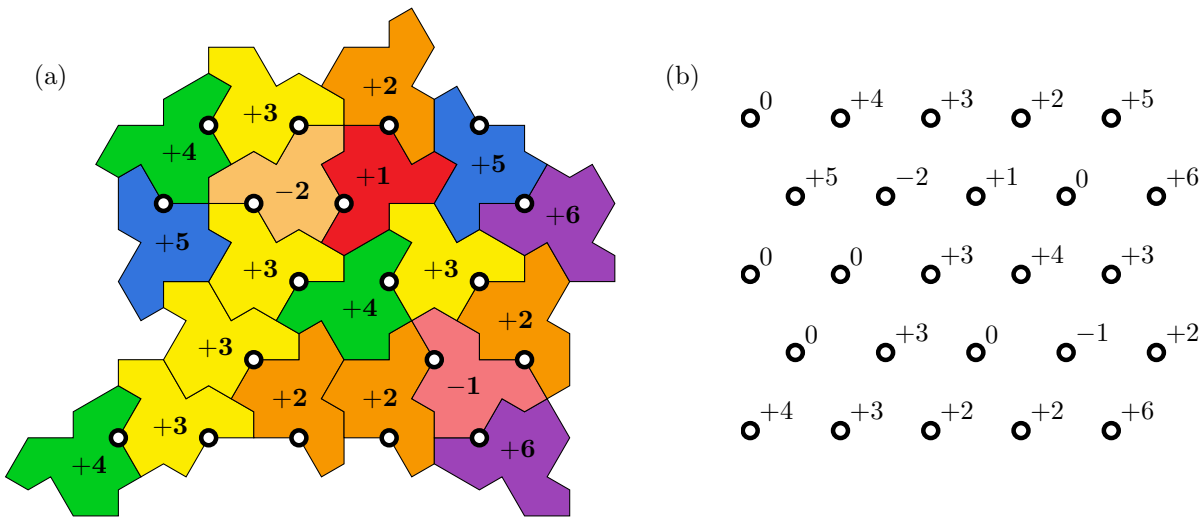
\begin{figure}
\begin{center}
\begin{tikzpicture}[
        scale=1.2
    ]
    \tikzstyle{every node}=[font=\normalsize]
    \node at (4.25,4) {(a)};
    \node at (11.25,4) {(b)};
\begin{scope}[xshift=5cm]
\smith{0}{0}{+4}  
\smith{1}{0}{+3}  
\smith{2}{0}{+2}  
\smith{3}{0}{+2}  
\smith{4}{0}{+6}  
\smith{1.5}{0.866025}{+3}  
\smith{3.5}{0.866025}{-1}  
\smith{4.5}{0.866025}{+2}  
\smith{2.0}{0.866025*2}{+3}  
\smith{3.0}{0.866025*2}{+4}  
\smith{4.0}{0.866025*2}{+3}  
\smith{0.5}{0.866025*3}{+5}  
\smith{1.5}{0.866025*3}{-2}  
\smith{2.5}{0.866025*3}{+1}  
\smith{4.5}{0.866025*3}{+6}  
\smith{1.0}{0.866025*4}{+4}  
\smith{2.0}{0.866025*4}{+3}  
\smith{3.0}{0.866025*4}{+2}  
\smith{4.0}{0.866025*4}{+5}  
\end{scope}
    \begin{scope}[xshift=13cm]
        \begin{scope} %
            \clip (-1.1,-.4) rectangle (3.7, 4.1);

            \foreach \a in {-3,-2,-1,0,1,2,3}
            \foreach \b in {0,1,2,3,4}{\
                \node[whitedotnone] at (\a+.5*\b,.866025*\b+.08) {};
            }
        \end{scope}

        \foreach \a/\b/\tilelabel in {
                -3/4/0 , 1/3/0   , 1/1/0, -2/2/0  , -1/2/0   , -1/1/0   ,
                            -2/4/+4 , -1/4/+3  , 0/4/+2 , 1/4/+5  ,
                            -2/3/+5 , -1/3/-2  , 0/3/+1 ,           2/3/+6,
                                                 0/2/+3 , 1/2/+4  , 2/2/+3  ,
                                                 0/1/+3           , 2/1/-1  , 3/1/+2,
                                      -1/0/+4  , 0/0/+3 , 1/0/+2  , 2/0/+2  , 3/0/+6}
        {
            \node[above right] at (\a+.5*\b,.866025*\b+.08) {$\tilelabel$};
        }
    \end{scope}
\end{tikzpicture}
\end{center}
\caption{(a) A portion of a hat tiling. The tiles are colored and
  labelled according to their orientations as in
  Figure~\ref{fig:anchored-tiles}.  The tiles' anchor points lie
  on a grid. (b) We have labelled each grid point with the orientation of
  the tile anchored there, or $0$ if no tile is anchored at the
  point. Note that the tiling is completely determined by the
  assignment of labels to grid points.}
\label{fig:sample-tiling}
\end{figure}

Although Smith et al.\@ gave an infinite family of related aperiodic
tiles, for simplicity, we focus here on the original ``hat''
polykite.  Our method for constructing a hat tiling is simple.
Consider the triangular grid of Figure~\ref{fig:triangular-grid},
whose grid points we have marked with small circles. We equip each hat
tile with an \emph{anchor} or \emph{control point} that is shown as a
small circle above.  The tiles will be placed on the triangular grid
such that their anchors coincide with grid points. Note that the kites
of each hat tile line up with kites on the triangular grid (shown as
light gray outlines in Figure~\ref{fig:triangular-grid}). Since each
hat tile's anchor is incident to four of its kites, forming an angle
of $4\cdot 60^{\circ}=240^{\circ}$, no two distinct tiles can be
anchored at the same position.  There are 12 possible orientations of
the hat tile, which are shown in Figure~\ref{fig:anchored-tiles}. For
convenience, we have colored each tile according to its orientation.

\begin{figure}
\begin{center}
\begin{tikzpicture}[
        scale=2
    ]
\begin{scope}
    \clip (-1.1,-.2) rectangle (3.1, 4*0.866025+.2);
    \begin{pgfonlayer}{labels}
      \clip (-1.1,-.2) rectangle (3.1, 4*0.866025+.2);
    \end{pgfonlayer}

    \foreach \a in {-4,-3,-2,-1,0,1,2,3,4}
    \foreach \b in {-1,0,1,2,3,4}{
      \begin{scope}[shift={(\a+.5*\b,0.866025*\b)}]
        \begin{pgfonlayer}{labels}
          \node[whitedotnone,fill=white] at (0,0) {};
        \end{pgfonlayer}
        \draw[gray] (0,0) -- (1,0);
        \draw[gray] (0,0) -- (0.5,0.866025);
        \draw[gray] (0,0) -- (-0.5,0.866025);
        \draw[thin,black!15] (0.5,-0.288675) -- (0.5,0.288675) -- (0,0.577350) -- (-0.5,0.288675);
      \end{scope}
    }
\end{scope}
\end{tikzpicture}
\end{center}
\caption{A triangular grid. The grid points are spaced 1 unit apart.}
\label{fig:triangular-grid}
\end{figure}
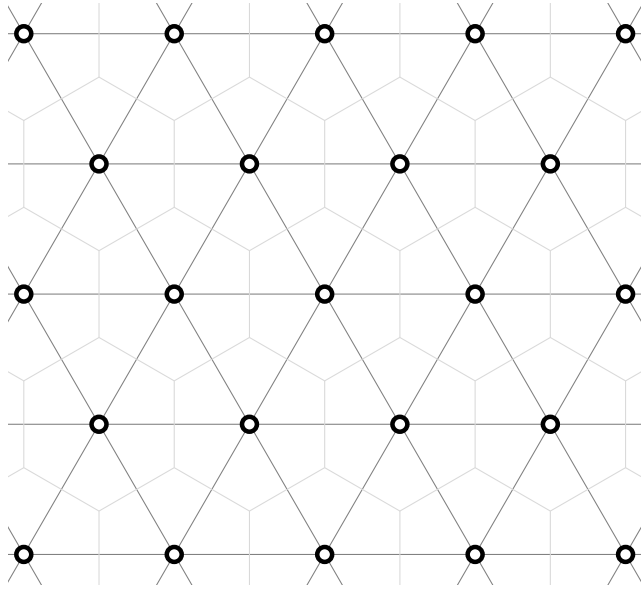
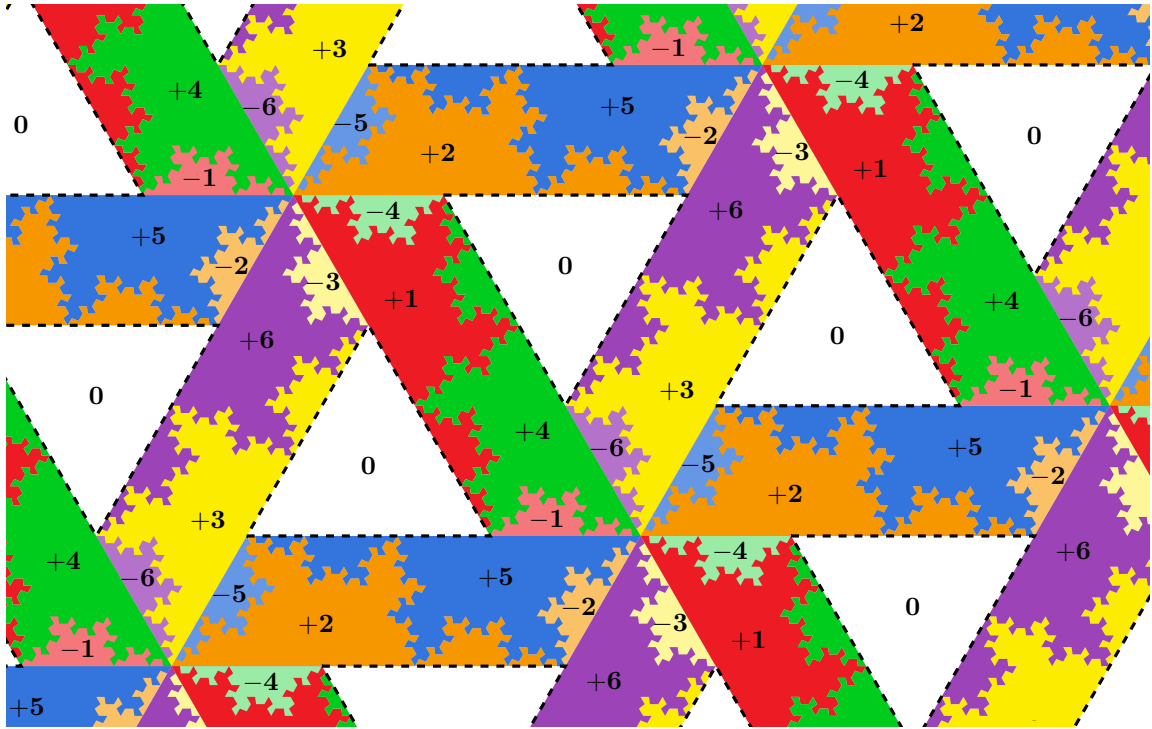
\begin{figure}
\begin{center}
\begin{tikzpicture}[
        scale=2
    ]
    \tikzstyle{every node}=[font=\normalsize]

    \clip (-1.1,-.4) rectangle (6.5, 4.4);
    \begin{pgfonlayer}{triangles}
      \clip (-1.1,-.4) rectangle (6.5, 4.4);
    \end{pgfonlayer}
    
    \foreach \a in {-1,0,1,2,3}
    \foreach \b in {-1,0,1,2}{
    \begin{scope}[xshift=\a*\Lxa cm + \b*\Lya cm,
                yshift=\a*\Lxb cm + \b*\Lyb cm]
            \FundamentalDomainAroundOrigin
    \end{scope}}
    \begin{scope}
    \clip (-1.1,-.36) rectangle (6.3, 4.4);
    \foreach \a in {-1,0,1,2,3}
    \foreach \b in {-1,0,1,2}{
    \begin{scope}[xshift=\a*\Lxa cm + \b*\Lya cm,
                yshift=\a*\Lxb cm + \b*\Lyb cm]
            \FundamentalDomainLabellingStandard
    \end{scope}}
    \end{scope}
\end{tikzpicture}
\end{center}
\caption{A Markov partition $\Plimit$ describing hat tilings with more hats
    than anti-hats. We have partitioned
  the plane into 13 colors: the 12 tile colors, as well as white,
  indicating the absence of a tile. We use the label 0 for the white
  regions.}
\label{fig:internal-space}
\end{figure}

To construct a hat tiling, overlay the triangular grid of
Figure~\ref{fig:triangular-grid} on the image of
Figure~\ref{fig:internal-space}. You may shift the grid left, right,
up, and down, but you may not rotate it. Place the grid in such a way
that none of the grid points fall on the boundaries of the colored
regions of Figure~\ref{fig:internal-space}. Then each grid point lies
in a unique colored region. If a grid point's color corresponds to one
of the 12 tile orientations, anchor a corresponding tile there. If a
grid point lands in a white region, no tile is anchored at it.
Figure~\ref{fig:example} shows an example of the triangular grid
overlaid on the colored image, and the corresponding tiling.

In the remainder of this paper, we explain in more detail how
Figure~\ref{fig:internal-space} is constructed, and why this
construction works.

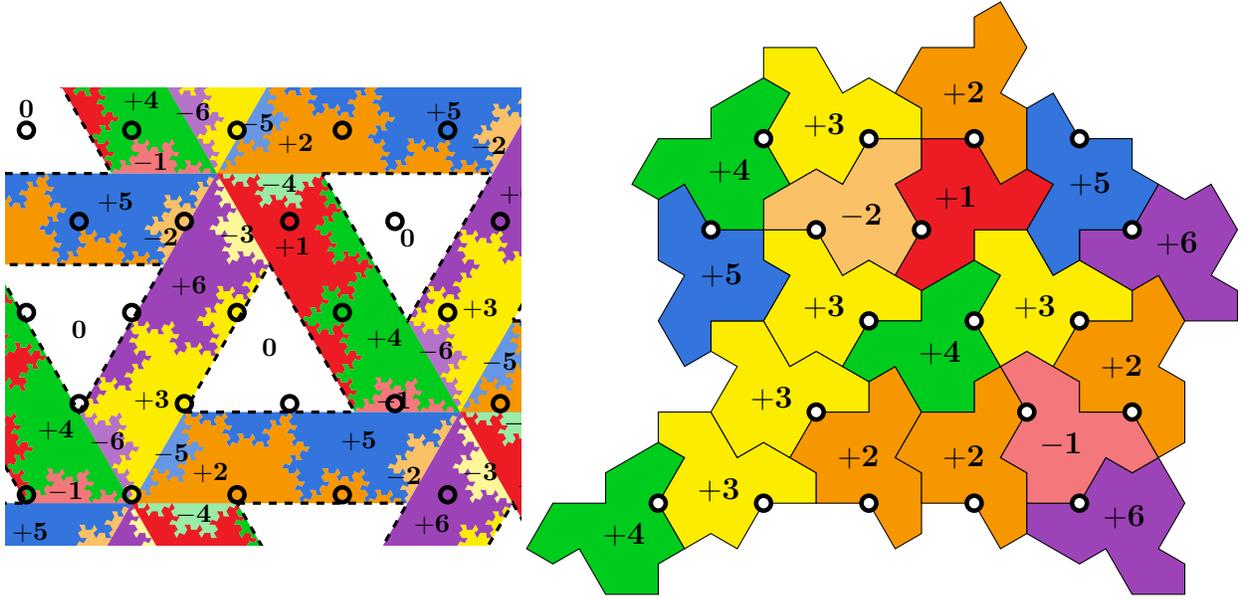
\begin{figure}
\begin{center}
\begin{tikzpicture}[
        scale=1.4
    ]
    \tikzstyle{every node}=[font=\large]

\begin{scope}
    \tikzstyle{every node}=[font=\normalsize]
    \clip (-1.2,-.4) rectangle (3.7, 3.95);
    \begin{pgfonlayer}{triangles}
      \clip (-1.2,-.4) rectangle (3.7, 3.95);
    \end{pgfonlayer}

    \foreach \a in {-1,0,1,2,3}
    \foreach \b in {-1,0,1,2}{
    \begin{scope}[xshift=\a*\Lxa cm + \b*\Lya cm,
                yshift=\a*\Lxb cm + \b*\Lyb cm]
            \FundamentalDomainAroundOrigin
            \FundamentalDomainLabellingFigureFive
    \end{scope}}

    \foreach \a in {-3,-2,-1,0,1,2,3}
    \foreach \b in {0,1,2,3,4}{
        \node[whitedotnone] at (\a+.5*\b,0.866025*\b+.08) {};
    }
\end{scope}

\begin{scope}[xshift=5cm]
\smith{0}{0}{+4}  
\smith{1}{0}{+3}  
\smith{2}{0}{+2}  
\smith{3}{0}{+2}  
\smith{4}{0}{+6}  
\smith{1.5}{0.866025}{+3}  
\smith{3.5}{0.866025}{-1}  
\smith{4.5}{0.866025}{+2}  
\smith{2.0}{0.866025*2}{+3}  
\smith{3.0}{0.866025*2}{+4}  
\smith{4.0}{0.866025*2}{+3}  
\smith{0.5}{0.866025*3}{+5}  
\smith{1.5}{0.866025*3}{-2}  
\smith{2.5}{0.866025*3}{+1}  
\smith{4.5}{0.866025*3}{+6}  
\smith{1.0}{0.866025*4}{+4}  
\smith{2.0}{0.866025*4}{+3}  
\smith{3.0}{0.866025*4}{+2}  
\smith{4.0}{0.866025*4}{+5}  
\end{scope}
\end{tikzpicture}
\end{center}

\caption{A valid pattern obtained by overlaying the triangular
  grid on the Markov partition.}
\label{fig:example}
\end{figure}
\section{Background and related work}

A tiling is a covering of the plane by isometric copies of a set of
polygons, called \emph{tiles}, that do not intersect (except on their boundaries).
Tilings are often periodic, that is, invariant under a nonzero translation.
A set of tiles is called \emph{aperiodic} if it tiles the plane but never
periodically. 
The first aperiodic sets were discovered by Berger in the 1960's
\cite{MR2939561,MR0216954} followed by Penrose in the 1970's
\cite{penrose_role_1974}.
It is worth noting that Penrose tilings resemble patterns observed in medieval
Islamic architecture constructed five centuries earlier \cite{Lu2007}.
The simplest version of Penrose tilings is made of only two tiles.  The existence of
an aperiodic monotile, i.e., of an aperiodic set made of a single tile, was an open question for 40 years until the recent
discovery of the hat tile \cite{MR4770585}. The history and theory of aperiodic
tilings is vast and will not be further described here.
For more information, we refer the reader to these books 
{\cite{MR857454,MR3136260,zbMATH00768067,zbMATH06785522}}. 
For additional work on the hat monotile and its family, see also
{\cite{zbMATH08141691,PhysRevB.108.224109,zbMATH08135691,zbMATH08105556,arXiv:2306.06512}}.

Our construction imitates a construction of valid
tilings for Jeandel and Rao's aperiodic set of 11 Wang tiles
\cite{MR4210631}. In \cite{MR4213162}, the first author showed
that valid Jeandel-Rao tilings can be
generated by coding the orbit of a $\Z^2$-action by a polygonal
partition of the torus. The method of the present
paper is the same, but the coding partition is partly fractal and
partly polygonal, which makes it unusual and interesting.

Anchors are also sometimes called \emph{control points}
\cite{MR3136260}. In contrast with \cite{zbMATH08105556}, where anchor
points lie inside or outside of the tiles, our placement of the anchor is
on a particular vertex on the boundary of the hat tile.

\section{Our construction, in more detail}

\subsection{Coordinates}

For our purposes, it is convenient to identify the plane with the set
of complex numbers. Let $\xi = \exp(\pi i/3)$ be the 6th root of
unity. Then every point in the plane can be written as $z =
a+b\xi$. The points of the triangular integer grid
(Figure~\ref{fig:triangular-grid}) are exactly the points with integer
coordinates, i.e., points of the form $z=a+b\xi$ where $a,b\in\Z$.

\subsection{The hat fractal}

The regions of Figure~\ref{fig:internal-space} are bounded by straight
lines and fractal curves. The fractal is constructed from two types
of segments by the recursive substitution rules shown in
Figure~\ref{fig:fractal-rules}(a). Some steps of the recursion are
shown in Figure~\ref{fig:fractal-rules}(b).  Note that the fractal
substitution rules are such that the fractal's complement is tiled
with triangles of various sizes (shown as gray triangles in
Figure~\ref{fig:fractal-rules}). With each application of the
substitution rules, the area covered by the blue and red segments
decreases by a constant factor. It follows that the fractal curve
obtained as the limit of this process has measure 0. Also, as an
intersection of infinitely many closed sets, the fractal curve is
closed. For each point that doesn't lie on this curve, it is easy to
compute which side of the curve is it on, because it will lie in one
of the (light or dark) gray triangles. 
Alternatively, the hat fractal can be defined from a substitution
on line segments as shown in Figure~\ref{fig:fractal-rules}(c).

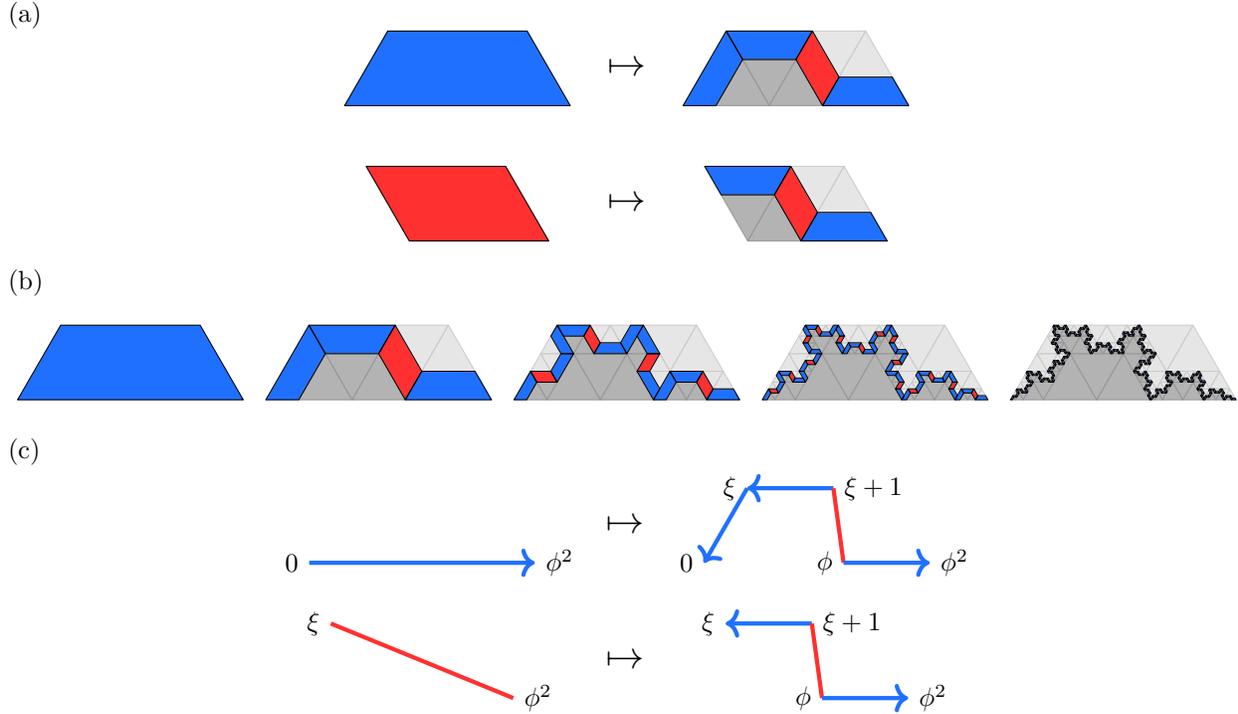
\begin{figure}
  (a)
  \[
  \fractal
  \begin{tikzpicture}[scale=3]
    \begin{scope}
      \Ar{0.000000}{0.000000}{0}{0}
    \end{scope}
    \path (1.25,0.165396) node[font=\Large] {$\mapsto$};
    \begin{scope}[xshift=1.5cm]
      \Td{0.381966}{0.000000}{0}{3}
      \Td{0.381966}{0.000000}{60}{3}
      \Td{0.381966}{0.000000}{120}{3}
      \Al{0.000000}{0.000000}{60}{2}
      \Al{0.190983}{0.330792}{0}{2}
      \C{0.572949}{0.330792}{0}{2}
      \Ar{0.618034}{0.000000}{0}{2}
      \Tl{0.809017}{0.330792}{240}{3}
      \Tl{0.809017}{0.330792}{300}{3}
    \end{scope}
    \begin{scope}[yshift=-0.6cm]
      \begin{scope}[xshift=-0.095492cm]
        \C{0.190983}{0.330792}{60}{0}
      \end{scope}
      \path (1.25,0.165396) node[font=\Large] {$\mapsto$};
      \begin{scope}[xshift=1.5cm,xshift=-0.095492cm]
        \Td{0.381966}{0.000000}{0}{3}
        \Td{0.381966}{0.000000}{60}{3}
        \Al{0.190983}{0.330792}{0}{2}
        \C{0.572949}{0.330792}{0}{2}
        \Ar{0.618034}{0.000000}{0}{2}
        \Tl{0.809017}{0.330792}{240}{3}
        \Tl{0.809017}{0.330792}{300}{3}
      \end{scope}
    \end{scope}
  \end{tikzpicture}
  \]
  (b)
  \[
  \fractal
  \begin{tikzpicture}[scale=3]
    \begin{scope}
      \Ar{0.000000}{0.000000}{0}{0}
    \end{scope}
    \begin{scope}[xshift=1.1cm]
      \Td{0.381966}{0.000000}{0}{3}
      \Td{0.381966}{0.000000}{60}{3}
      \Td{0.381966}{0.000000}{120}{3}
      \Al{0.000000}{0.000000}{60}{2}
      \Al{0.190983}{0.330792}{0}{2}
      \C{0.572949}{0.330792}{0}{2}
      \Ar{0.618034}{0.000000}{0}{2}
      \Tl{0.809017}{0.330792}{240}{3}
      \Tl{0.809017}{0.330792}{300}{3}
    \end{scope}
    \begin{scope}[xshift=2.2cm]
      \Td{0.381966}{0.000000}{0}{3}
      \Td{0.381966}{0.000000}{60}{3}
      \Td{0.381966}{0.000000}{120}{3}
      \Td{0.145898}{0.000000}{60}{5}
      \Td{0.145898}{0.000000}{120}{5}
      \Al{0.000000}{0.000000}{60}{4}
      \C{0.072949}{0.126351}{60}{4}
      \Ar{0.190983}{0.078089}{60}{4}
      \Tl{0.118034}{0.204441}{300}{5}
      \Tl{0.118034}{0.204441}{0}{5}
      \Tl{0.118034}{0.204441}{60}{5}
      \Ar{0.263932}{0.204441}{120}{4}
      \Td{0.263932}{0.204441}{0}{5}
      \Td{0.263932}{0.204441}{60}{5}
      \Al{0.190983}{0.330792}{0}{4}
      \C{0.336881}{0.330792}{0}{4}
      \Ar{0.354102}{0.204441}{0}{4}
      \Tl{0.427051}{0.330792}{240}{5}
      \Tl{0.427051}{0.330792}{300}{5}
      \Tl{0.427051}{0.330792}{0}{5}
      \Ar{0.500000}{0.204441}{60}{4}
      \Td{0.500000}{0.204441}{300}{5}
      \Td{0.500000}{0.204441}{0}{5}
      \Al{0.572949}{0.330792}{300}{4}
      \C{0.645898}{0.204441}{300}{4}
      \Ar{0.545085}{0.126351}{300}{4}
      \Tl{0.690983}{0.126351}{180}{5}
      \Tl{0.690983}{0.126351}{240}{5}
      \Td{0.763932}{-0.000000}{0}{5}
      \Td{0.763932}{-0.000000}{60}{5}
      \Td{0.763932}{-0.000000}{120}{5}
      \Al{0.618034}{-0.000000}{60}{4}
      \Al{0.690983}{0.126351}{0}{4}
      \C{0.836881}{0.126351}{0}{4}
      \Ar{0.854102}{-0.000000}{0}{4}
      \Tl{0.927051}{0.126351}{240}{5}
      \Tl{0.927051}{0.126351}{300}{5}
      \Tl{0.809017}{0.330792}{240}{3}
      \Tl{0.809017}{0.330792}{300}{3}
    \end{scope}
    \begin{scope}[xshift=3.3cm]
      \tikzset{mypath/.style={draw=black,line width=0.2}}
      \Td{0.381966}{0.000000}{0}{3}
      \Td{0.381966}{0.000000}{60}{3}
      \Td{0.381966}{0.000000}{120}{3}
      \Td{0.145898}{0.000000}{60}{5}
      \Td{0.145898}{0.000000}{120}{5}
      \Td{0.055728}{0.000000}{60}{7}
      \Td{0.055728}{0.000000}{120}{7}
      \Al{0.000000}{0.000000}{60}{6}
      \C{0.027864}{0.048262}{60}{6}
      \Ar{0.072949}{0.029828}{60}{6}
      \Tl{0.045085}{0.078089}{300}{7}
      \Tl{0.045085}{0.078089}{0}{7}
      \Tl{0.045085}{0.078089}{60}{7}
      \Ar{0.100813}{0.078089}{120}{6}
      \Td{0.100813}{0.078089}{0}{7}
      \Td{0.100813}{0.078089}{60}{7}
      \Al{0.072949}{0.126351}{0}{6}
      \C{0.128677}{0.126351}{0}{6}
      \Ar{0.135255}{0.078089}{0}{6}
      \Tl{0.163119}{0.126351}{240}{7}
      \Tl{0.163119}{0.126351}{300}{7}
      \Td{0.218847}{0.126351}{60}{7}
      \Td{0.218847}{0.126351}{120}{7}
      \Td{0.218847}{0.126351}{180}{7}
      \Al{0.190983}{0.078089}{120}{6}
      \Al{0.163119}{0.126351}{60}{6}
      \C{0.190983}{0.174613}{60}{6}
      \Ar{0.236068}{0.156179}{60}{6}
      \Tl{0.208204}{0.204441}{300}{7}
      \Tl{0.208204}{0.204441}{0}{7}
      \Tl{0.118034}{0.204441}{300}{5}
      \Tl{0.118034}{0.204441}{0}{5}
      \Tl{0.118034}{0.204441}{60}{5}
      \Td{0.236068}{0.252703}{120}{7}
      \Td{0.236068}{0.252703}{180}{7}
      \Td{0.236068}{0.252703}{240}{7}
      \Al{0.263932}{0.204441}{180}{6}
      \Al{0.208204}{0.204441}{120}{6}
      \C{0.180340}{0.252703}{120}{6}
      \Ar{0.218847}{0.282530}{120}{6}
      \Tl{0.163119}{0.282530}{0}{7}
      \Tl{0.163119}{0.282530}{60}{7}
      \Td{0.263932}{0.204441}{0}{5}
      \Td{0.263932}{0.204441}{60}{5}
      \Td{0.218847}{0.282530}{0}{7}
      \Td{0.218847}{0.282530}{60}{7}
      \Al{0.190983}{0.330792}{0}{6}
      \C{0.246711}{0.330792}{0}{6}
      \Ar{0.253289}{0.282530}{0}{6}
      \Tl{0.281153}{0.330792}{240}{7}
      \Tl{0.281153}{0.330792}{300}{7}
      \Tl{0.281153}{0.330792}{0}{7}
      \Ar{0.309017}{0.282530}{60}{6}
      \Td{0.309017}{0.282530}{300}{7}
      \Td{0.309017}{0.282530}{0}{7}
      \Al{0.336881}{0.330792}{300}{6}
      \C{0.364745}{0.282530}{300}{6}
      \Ar{0.326238}{0.252703}{300}{6}
      \Tl{0.381966}{0.252703}{180}{7}
      \Tl{0.381966}{0.252703}{240}{7}
      \Td{0.409830}{0.204441}{0}{7}
      \Td{0.409830}{0.204441}{60}{7}
      \Td{0.409830}{0.204441}{120}{7}
      \Al{0.354102}{0.204441}{60}{6}
      \Al{0.381966}{0.252703}{0}{6}
      \C{0.437694}{0.252703}{0}{6}
      \Ar{0.444272}{0.204441}{0}{6}
      \Tl{0.472136}{0.252703}{240}{7}
      \Tl{0.472136}{0.252703}{300}{7}
      \Tl{0.427051}{0.330792}{240}{5}
      \Tl{0.427051}{0.330792}{300}{5}
      \Tl{0.427051}{0.330792}{0}{5}
      \Td{0.527864}{0.252703}{60}{7}
      \Td{0.527864}{0.252703}{120}{7}
      \Td{0.527864}{0.252703}{180}{7}
      \Al{0.500000}{0.204441}{120}{6}
      \Al{0.472136}{0.252703}{60}{6}
      \C{0.500000}{0.300965}{60}{6}
      \Ar{0.545085}{0.282530}{60}{6}
      \Tl{0.517221}{0.330792}{300}{7}
      \Tl{0.517221}{0.330792}{0}{7}
      \Td{0.500000}{0.204441}{300}{5}
      \Td{0.500000}{0.204441}{0}{5}
      \Td{0.545085}{0.282530}{300}{7}
      \Td{0.545085}{0.282530}{0}{7}
      \Al{0.572949}{0.330792}{300}{6}
      \C{0.600813}{0.282530}{300}{6}
      \Ar{0.562306}{0.252703}{300}{6}
      \Tl{0.618034}{0.252703}{180}{7}
      \Tl{0.618034}{0.252703}{240}{7}
      \Tl{0.618034}{0.252703}{300}{7}
      \Ar{0.590170}{0.204441}{0}{6}
      \Td{0.590170}{0.204441}{240}{7}
      \Td{0.590170}{0.204441}{300}{7}
      \Al{0.645898}{0.204441}{240}{6}
      \C{0.618034}{0.156179}{240}{6}
      \Ar{0.572949}{0.174613}{240}{6}
      \Tl{0.600813}{0.126351}{120}{7}
      \Tl{0.600813}{0.126351}{180}{7}
      \Td{0.572949}{0.078089}{300}{7}
      \Td{0.572949}{0.078089}{0}{7}
      \Td{0.572949}{0.078089}{60}{7}
      \Al{0.545085}{0.126351}{0}{6}
      \Al{0.600813}{0.126351}{300}{6}
      \C{0.628677}{0.078089}{300}{6}
      \Ar{0.590170}{0.048262}{300}{6}
      \Tl{0.645898}{0.048262}{180}{7}
      \Tl{0.645898}{0.048262}{240}{7}
      \Tl{0.690983}{0.126351}{180}{5}
      \Tl{0.690983}{0.126351}{240}{5}
      \Td{0.763932}{-0.000000}{0}{5}
      \Td{0.763932}{-0.000000}{60}{5}
      \Td{0.763932}{-0.000000}{120}{5}
      \Td{0.673762}{-0.000000}{60}{7}
      \Td{0.673762}{-0.000000}{120}{7}
      \Al{0.618034}{-0.000000}{60}{6}
      \C{0.645898}{0.048262}{60}{6}
      \Ar{0.690983}{0.029828}{60}{6}
      \Tl{0.663119}{0.078089}{300}{7}
      \Tl{0.663119}{0.078089}{0}{7}
      \Tl{0.663119}{0.078089}{60}{7}
      \Ar{0.718847}{0.078089}{120}{6}
      \Td{0.718847}{0.078089}{0}{7}
      \Td{0.718847}{0.078089}{60}{7}
      \Al{0.690983}{0.126351}{0}{6}
      \C{0.746711}{0.126351}{0}{6}
      \Ar{0.753289}{0.078089}{0}{6}
      \Tl{0.781153}{0.126351}{240}{7}
      \Tl{0.781153}{0.126351}{300}{7}
      \Tl{0.781153}{0.126351}{0}{7}
      \Ar{0.809017}{0.078089}{60}{6}
      \Td{0.809017}{0.078089}{300}{7}
      \Td{0.809017}{0.078089}{0}{7}
      \Al{0.836881}{0.126351}{300}{6}
      \C{0.864745}{0.078089}{300}{6}
      \Ar{0.826238}{0.048262}{300}{6}
      \Tl{0.881966}{0.048262}{180}{7}
      \Tl{0.881966}{0.048262}{240}{7}
      \Td{0.909830}{-0.000000}{0}{7}
      \Td{0.909830}{-0.000000}{60}{7}
      \Td{0.909830}{-0.000000}{120}{7}
      \Al{0.854102}{-0.000000}{60}{6}
      \Al{0.881966}{0.048262}{0}{6}
      \C{0.937694}{0.048262}{0}{6}
      \Ar{0.944272}{-0.000000}{0}{6}
      \Tl{0.972136}{0.048262}{240}{7}
      \Tl{0.972136}{0.048262}{300}{7}
      \Tl{0.927051}{0.126351}{240}{5}
      \Tl{0.927051}{0.126351}{300}{5}
      \Tl{0.809017}{0.330792}{240}{3}
      \Tl{0.809017}{0.330792}{300}{3}
    \end{scope}
    \begin{scope}[xshift=4.4cm]
      \tikzset{mypath/.style={draw=black,line width=0.1}}
      \Td{0.381966}{0.000000}{0}{3}
      \Td{0.381966}{0.000000}{60}{3}
      \Td{0.381966}{0.000000}{120}{3}
      \Td{0.145898}{0.000000}{60}{5}
      \Td{0.145898}{0.000000}{120}{5}
      \Td{0.055728}{0.000000}{60}{7}
      \Td{0.055728}{0.000000}{120}{7}
      \Td{0.021286}{0.000000}{60}{9}
      \Td{0.021286}{0.000000}{120}{9}
      \Al{0.000000}{0.000000}{60}{8}
      \C{0.010643}{0.018434}{60}{8}
      \Ar{0.027864}{0.011393}{60}{8}
      \Tl{0.017221}{0.029828}{300}{9}
      \Tl{0.017221}{0.029828}{0}{9}
      \Tl{0.017221}{0.029828}{60}{9}
      \Ar{0.038507}{0.029828}{120}{8}
      \Td{0.038507}{0.029828}{0}{9}
      \Td{0.038507}{0.029828}{60}{9}
      \Al{0.027864}{0.048262}{0}{8}
      \C{0.049150}{0.048262}{0}{8}
      \Ar{0.051663}{0.029828}{0}{8}
      \Tl{0.062306}{0.048262}{240}{9}
      \Tl{0.062306}{0.048262}{300}{9}
      \Td{0.083592}{0.048262}{60}{9}
      \Td{0.083592}{0.048262}{120}{9}
      \Td{0.083592}{0.048262}{180}{9}
      \Al{0.072949}{0.029828}{120}{8}
      \Al{0.062306}{0.048262}{60}{8}
      \C{0.072949}{0.066696}{60}{8}
      \Ar{0.090170}{0.059655}{60}{8}
      \Tl{0.079527}{0.078089}{300}{9}
      \Tl{0.079527}{0.078089}{0}{9}
      \Tl{0.045085}{0.078089}{300}{7}
      \Tl{0.045085}{0.078089}{0}{7}
      \Tl{0.045085}{0.078089}{60}{7}
      \Td{0.090170}{0.096524}{120}{9}
      \Td{0.090170}{0.096524}{180}{9}
      \Td{0.090170}{0.096524}{240}{9}
      \Al{0.100813}{0.078089}{180}{8}
      \Al{0.079527}{0.078089}{120}{8}
      \C{0.068884}{0.096524}{120}{8}
      \Ar{0.083592}{0.107917}{120}{8}
      \Tl{0.062306}{0.107917}{0}{9}
      \Tl{0.062306}{0.107917}{60}{9}
      \Td{0.100813}{0.078089}{0}{7}
      \Td{0.100813}{0.078089}{60}{7}
      \Td{0.083592}{0.107917}{0}{9}
      \Td{0.083592}{0.107917}{60}{9}
      \Al{0.072949}{0.126351}{0}{8}
      \C{0.094235}{0.126351}{0}{8}
      \Ar{0.096748}{0.107917}{0}{8}
      \Tl{0.107391}{0.126351}{240}{9}
      \Tl{0.107391}{0.126351}{300}{9}
      \Tl{0.107391}{0.126351}{0}{9}
      \Ar{0.118034}{0.107917}{60}{8}
      \Td{0.118034}{0.107917}{300}{9}
      \Td{0.118034}{0.107917}{0}{9}
      \Al{0.128677}{0.126351}{300}{8}
      \C{0.139320}{0.107917}{300}{8}
      \Ar{0.124612}{0.096524}{300}{8}
      \Tl{0.145898}{0.096524}{180}{9}
      \Tl{0.145898}{0.096524}{240}{9}
      \Td{0.156541}{0.078089}{0}{9}
      \Td{0.156541}{0.078089}{60}{9}
      \Td{0.156541}{0.078089}{120}{9}
      \Al{0.135255}{0.078089}{60}{8}
      \Al{0.145898}{0.096524}{0}{8}
      \C{0.167184}{0.096524}{0}{8}
      \Ar{0.169697}{0.078089}{0}{8}
      \Tl{0.180340}{0.096524}{240}{9}
      \Tl{0.180340}{0.096524}{300}{9}
      \Tl{0.163119}{0.126351}{240}{7}
      \Tl{0.163119}{0.126351}{300}{7}
      \Td{0.218847}{0.126351}{60}{7}
      \Td{0.218847}{0.126351}{120}{7}
      \Td{0.218847}{0.126351}{180}{7}
      \Td{0.201626}{0.096524}{120}{9}
      \Td{0.201626}{0.096524}{180}{9}
      \Al{0.190983}{0.078089}{120}{8}
      \C{0.180340}{0.096524}{120}{8}
      \Ar{0.195048}{0.107917}{120}{8}
      \Tl{0.173762}{0.107917}{0}{9}
      \Tl{0.173762}{0.107917}{60}{9}
      \Tl{0.173762}{0.107917}{120}{9}
      \Ar{0.184405}{0.126351}{180}{8}
      \Td{0.184405}{0.126351}{60}{9}
      \Td{0.184405}{0.126351}{120}{9}
      \Al{0.163119}{0.126351}{60}{8}
      \C{0.173762}{0.144786}{60}{8}
      \Ar{0.190983}{0.137745}{60}{8}
      \Tl{0.180340}{0.156179}{300}{9}
      \Tl{0.180340}{0.156179}{0}{9}
      \Tl{0.180340}{0.156179}{60}{9}
      \Ar{0.201626}{0.156179}{120}{8}
      \Td{0.201626}{0.156179}{0}{9}
      \Td{0.201626}{0.156179}{60}{9}
      \Al{0.190983}{0.174613}{0}{8}
      \C{0.212269}{0.174613}{0}{8}
      \Ar{0.214782}{0.156179}{0}{8}
      \Tl{0.225425}{0.174613}{240}{9}
      \Tl{0.225425}{0.174613}{300}{9}
      \Td{0.246711}{0.174613}{60}{9}
      \Td{0.246711}{0.174613}{120}{9}
      \Td{0.246711}{0.174613}{180}{9}
      \Al{0.236068}{0.156179}{120}{8}
      \Al{0.225425}{0.174613}{60}{8}
      \C{0.236068}{0.193048}{60}{8}
      \Ar{0.253289}{0.186006}{60}{8}
      \Tl{0.242646}{0.204441}{300}{9}
      \Tl{0.242646}{0.204441}{0}{9}
      \Tl{0.208204}{0.204441}{300}{7}
      \Tl{0.208204}{0.204441}{0}{7}
      \Tl{0.118034}{0.204441}{300}{5}
      \Tl{0.118034}{0.204441}{0}{5}
      \Tl{0.118034}{0.204441}{60}{5}
      \Td{0.236068}{0.252703}{120}{7}
      \Td{0.236068}{0.252703}{180}{7}
      \Td{0.236068}{0.252703}{240}{7}
      \Td{0.253289}{0.222875}{180}{9}
      \Td{0.253289}{0.222875}{240}{9}
      \Al{0.263932}{0.204441}{180}{8}
      \C{0.242646}{0.204441}{180}{8}
      \Ar{0.240133}{0.222875}{180}{8}
      \Tl{0.229490}{0.204441}{60}{9}
      \Tl{0.229490}{0.204441}{120}{9}
      \Tl{0.229490}{0.204441}{180}{9}
      \Ar{0.218847}{0.222875}{240}{8}
      \Td{0.218847}{0.222875}{120}{9}
      \Td{0.218847}{0.222875}{180}{9}
      \Al{0.208204}{0.204441}{120}{8}
      \C{0.197561}{0.222875}{120}{8}
      \Ar{0.212269}{0.234268}{120}{8}
      \Tl{0.190983}{0.234268}{0}{9}
      \Tl{0.190983}{0.234268}{60}{9}
      \Tl{0.190983}{0.234268}{120}{9}
      \Ar{0.201626}{0.252703}{180}{8}
      \Td{0.201626}{0.252703}{60}{9}
      \Td{0.201626}{0.252703}{120}{9}
      \Al{0.180340}{0.252703}{60}{8}
      \C{0.190983}{0.271137}{60}{8}
      \Ar{0.208204}{0.264096}{60}{8}
      \Tl{0.197561}{0.282530}{300}{9}
      \Tl{0.197561}{0.282530}{0}{9}
      \Td{0.208204}{0.300965}{120}{9}
      \Td{0.208204}{0.300965}{180}{9}
      \Td{0.208204}{0.300965}{240}{9}
      \Al{0.218847}{0.282530}{180}{8}
      \Al{0.197561}{0.282530}{120}{8}
      \C{0.186918}{0.300965}{120}{8}
      \Ar{0.201626}{0.312358}{120}{8}
      \Tl{0.180340}{0.312358}{0}{9}
      \Tl{0.180340}{0.312358}{60}{9}
      \Tl{0.163119}{0.282530}{0}{7}
      \Tl{0.163119}{0.282530}{60}{7}
      \Td{0.263932}{0.204441}{0}{5}
      \Td{0.263932}{0.204441}{60}{5}
      \Td{0.218847}{0.282530}{0}{7}
      \Td{0.218847}{0.282530}{60}{7}
      \Td{0.201626}{0.312358}{0}{9}
      \Td{0.201626}{0.312358}{60}{9}
      \Al{0.190983}{0.330792}{0}{8}
      \C{0.212269}{0.330792}{0}{8}
      \Ar{0.214782}{0.312358}{0}{8}
      \Tl{0.225425}{0.330792}{240}{9}
      \Tl{0.225425}{0.330792}{300}{9}
      \Tl{0.225425}{0.330792}{0}{9}
      \Ar{0.236068}{0.312358}{60}{8}
      \Td{0.236068}{0.312358}{300}{9}
      \Td{0.236068}{0.312358}{0}{9}
      \Al{0.246711}{0.330792}{300}{8}
      \C{0.257354}{0.312358}{300}{8}
      \Ar{0.242646}{0.300965}{300}{8}
      \Tl{0.263932}{0.300965}{180}{9}
      \Tl{0.263932}{0.300965}{240}{9}
      \Td{0.274575}{0.282530}{0}{9}
      \Td{0.274575}{0.282530}{60}{9}
      \Td{0.274575}{0.282530}{120}{9}
      \Al{0.253289}{0.282530}{60}{8}
      \Al{0.263932}{0.300965}{0}{8}
      \C{0.285218}{0.300965}{0}{8}
      \Ar{0.287731}{0.282530}{0}{8}
      \Tl{0.298374}{0.300965}{240}{9}
      \Tl{0.298374}{0.300965}{300}{9}
      \Tl{0.281153}{0.330792}{240}{7}
      \Tl{0.281153}{0.330792}{300}{7}
      \Tl{0.281153}{0.330792}{0}{7}
      \Td{0.319660}{0.300965}{60}{9}
      \Td{0.319660}{0.300965}{120}{9}
      \Td{0.319660}{0.300965}{180}{9}
      \Al{0.309017}{0.282530}{120}{8}
      \Al{0.298374}{0.300965}{60}{8}
      \C{0.309017}{0.319399}{60}{8}
      \Ar{0.326238}{0.312358}{60}{8}
      \Tl{0.315595}{0.330792}{300}{9}
      \Tl{0.315595}{0.330792}{0}{9}
      \Td{0.309017}{0.282530}{300}{7}
      \Td{0.309017}{0.282530}{0}{7}
      \Td{0.326238}{0.312358}{300}{9}
      \Td{0.326238}{0.312358}{0}{9}
      \Al{0.336881}{0.330792}{300}{8}
      \C{0.347524}{0.312358}{300}{8}
      \Ar{0.332816}{0.300965}{300}{8}
      \Tl{0.354102}{0.300965}{180}{9}
      \Tl{0.354102}{0.300965}{240}{9}
      \Tl{0.354102}{0.300965}{300}{9}
      \Ar{0.343459}{0.282530}{0}{8}
      \Td{0.343459}{0.282530}{240}{9}
      \Td{0.343459}{0.282530}{300}{9}
      \Al{0.364745}{0.282530}{240}{8}
      \C{0.354102}{0.264096}{240}{8}
      \Ar{0.336881}{0.271137}{240}{8}
      \Tl{0.347524}{0.252703}{120}{9}
      \Tl{0.347524}{0.252703}{180}{9}
      \Td{0.336881}{0.234268}{300}{9}
      \Td{0.336881}{0.234268}{0}{9}
      \Td{0.336881}{0.234268}{60}{9}
      \Al{0.326238}{0.252703}{0}{8}
      \Al{0.347524}{0.252703}{300}{8}
      \C{0.358167}{0.234268}{300}{8}
      \Ar{0.343459}{0.222875}{300}{8}
      \Tl{0.364745}{0.222875}{180}{9}
      \Tl{0.364745}{0.222875}{240}{9}
      \Tl{0.381966}{0.252703}{180}{7}
      \Tl{0.381966}{0.252703}{240}{7}
      \Td{0.409830}{0.204441}{0}{7}
      \Td{0.409830}{0.204441}{60}{7}
      \Td{0.409830}{0.204441}{120}{7}
      \Td{0.375388}{0.204441}{60}{9}
      \Td{0.375388}{0.204441}{120}{9}
      \Al{0.354102}{0.204441}{60}{8}
      \C{0.364745}{0.222875}{60}{8}
      \Ar{0.381966}{0.215834}{60}{8}
      \Tl{0.371323}{0.234268}{300}{9}
      \Tl{0.371323}{0.234268}{0}{9}
      \Tl{0.371323}{0.234268}{60}{9}
      \Ar{0.392609}{0.234268}{120}{8}
      \Td{0.392609}{0.234268}{0}{9}
      \Td{0.392609}{0.234268}{60}{9}
      \Al{0.381966}{0.252703}{0}{8}
      \C{0.403252}{0.252703}{0}{8}
      \Ar{0.405765}{0.234268}{0}{8}
      \Tl{0.416408}{0.252703}{240}{9}
      \Tl{0.416408}{0.252703}{300}{9}
      \Tl{0.416408}{0.252703}{0}{9}
      \Ar{0.427051}{0.234268}{60}{8}
      \Td{0.427051}{0.234268}{300}{9}
      \Td{0.427051}{0.234268}{0}{9}
      \Al{0.437694}{0.252703}{300}{8}
      \C{0.448337}{0.234268}{300}{8}
      \Ar{0.433629}{0.222875}{300}{8}
      \Tl{0.454915}{0.222875}{180}{9}
      \Tl{0.454915}{0.222875}{240}{9}
      \Td{0.465558}{0.204441}{0}{9}
      \Td{0.465558}{0.204441}{60}{9}
      \Td{0.465558}{0.204441}{120}{9}
      \Al{0.444272}{0.204441}{60}{8}
      \Al{0.454915}{0.222875}{0}{8}
      \C{0.476201}{0.222875}{0}{8}
      \Ar{0.478714}{0.204441}{0}{8}
      \Tl{0.489357}{0.222875}{240}{9}
      \Tl{0.489357}{0.222875}{300}{9}
      \Tl{0.472136}{0.252703}{240}{7}
      \Tl{0.472136}{0.252703}{300}{7}
      \Tl{0.427051}{0.330792}{240}{5}
      \Tl{0.427051}{0.330792}{300}{5}
      \Tl{0.427051}{0.330792}{0}{5}
      \Td{0.527864}{0.252703}{60}{7}
      \Td{0.527864}{0.252703}{120}{7}
      \Td{0.527864}{0.252703}{180}{7}
      \Td{0.510643}{0.222875}{120}{9}
      \Td{0.510643}{0.222875}{180}{9}
      \Al{0.500000}{0.204441}{120}{8}
      \C{0.489357}{0.222875}{120}{8}
      \Ar{0.504065}{0.234268}{120}{8}
      \Tl{0.482779}{0.234268}{0}{9}
      \Tl{0.482779}{0.234268}{60}{9}
      \Tl{0.482779}{0.234268}{120}{9}
      \Ar{0.493422}{0.252703}{180}{8}
      \Td{0.493422}{0.252703}{60}{9}
      \Td{0.493422}{0.252703}{120}{9}
      \Al{0.472136}{0.252703}{60}{8}
      \C{0.482779}{0.271137}{60}{8}
      \Ar{0.500000}{0.264096}{60}{8}
      \Tl{0.489357}{0.282530}{300}{9}
      \Tl{0.489357}{0.282530}{0}{9}
      \Tl{0.489357}{0.282530}{60}{9}
      \Ar{0.510643}{0.282530}{120}{8}
      \Td{0.510643}{0.282530}{0}{9}
      \Td{0.510643}{0.282530}{60}{9}
      \Al{0.500000}{0.300965}{0}{8}
      \C{0.521286}{0.300965}{0}{8}
      \Ar{0.523799}{0.282530}{0}{8}
      \Tl{0.534442}{0.300965}{240}{9}
      \Tl{0.534442}{0.300965}{300}{9}
      \Td{0.555728}{0.300965}{60}{9}
      \Td{0.555728}{0.300965}{120}{9}
      \Td{0.555728}{0.300965}{180}{9}
      \Al{0.545085}{0.282530}{120}{8}
      \Al{0.534442}{0.300965}{60}{8}
      \C{0.545085}{0.319399}{60}{8}
      \Ar{0.562306}{0.312358}{60}{8}
      \Tl{0.551663}{0.330792}{300}{9}
      \Tl{0.551663}{0.330792}{0}{9}
      \Tl{0.517221}{0.330792}{300}{7}
      \Tl{0.517221}{0.330792}{0}{7}
      \Td{0.500000}{0.204441}{300}{5}
      \Td{0.500000}{0.204441}{0}{5}
      \Td{0.545085}{0.282530}{300}{7}
      \Td{0.545085}{0.282530}{0}{7}
      \Td{0.562306}{0.312358}{300}{9}
      \Td{0.562306}{0.312358}{0}{9}
      \Al{0.572949}{0.330792}{300}{8}
      \C{0.583592}{0.312358}{300}{8}
      \Ar{0.568884}{0.300965}{300}{8}
      \Tl{0.590170}{0.300965}{180}{9}
      \Tl{0.590170}{0.300965}{240}{9}
      \Tl{0.590170}{0.300965}{300}{9}
      \Ar{0.579527}{0.282530}{0}{8}
      \Td{0.579527}{0.282530}{240}{9}
      \Td{0.579527}{0.282530}{300}{9}
      \Al{0.600813}{0.282530}{240}{8}
      \C{0.590170}{0.264096}{240}{8}
      \Ar{0.572949}{0.271137}{240}{8}
      \Tl{0.583592}{0.252703}{120}{9}
      \Tl{0.583592}{0.252703}{180}{9}
      \Td{0.572949}{0.234268}{300}{9}
      \Td{0.572949}{0.234268}{0}{9}
      \Td{0.572949}{0.234268}{60}{9}
      \Al{0.562306}{0.252703}{0}{8}
      \Al{0.583592}{0.252703}{300}{8}
      \C{0.594235}{0.234268}{300}{8}
      \Ar{0.579527}{0.222875}{300}{8}
      \Tl{0.600813}{0.222875}{180}{9}
      \Tl{0.600813}{0.222875}{240}{9}
      \Tl{0.618034}{0.252703}{180}{7}
      \Tl{0.618034}{0.252703}{240}{7}
      \Tl{0.618034}{0.252703}{300}{7}
      \Td{0.611456}{0.204441}{0}{9}
      \Td{0.611456}{0.204441}{60}{9}
      \Td{0.611456}{0.204441}{120}{9}
      \Al{0.590170}{0.204441}{60}{8}
      \Al{0.600813}{0.222875}{0}{8}
      \C{0.622099}{0.222875}{0}{8}
      \Ar{0.624612}{0.204441}{0}{8}
      \Tl{0.635255}{0.222875}{240}{9}
      \Tl{0.635255}{0.222875}{300}{9}
      \Td{0.590170}{0.204441}{240}{7}
      \Td{0.590170}{0.204441}{300}{7}
      \Td{0.624612}{0.204441}{240}{9}
      \Td{0.624612}{0.204441}{300}{9}
      \Al{0.645898}{0.204441}{240}{8}
      \C{0.635255}{0.186006}{240}{8}
      \Ar{0.618034}{0.193048}{240}{8}
      \Tl{0.628677}{0.174613}{120}{9}
      \Tl{0.628677}{0.174613}{180}{9}
      \Tl{0.628677}{0.174613}{240}{9}
      \Ar{0.607391}{0.174613}{300}{8}
      \Td{0.607391}{0.174613}{180}{9}
      \Td{0.607391}{0.174613}{240}{9}
      \Al{0.618034}{0.156179}{180}{8}
      \C{0.596748}{0.156179}{180}{8}
      \Ar{0.594235}{0.174613}{180}{8}
      \Tl{0.583592}{0.156179}{60}{9}
      \Tl{0.583592}{0.156179}{120}{9}
      \Td{0.562306}{0.156179}{240}{9}
      \Td{0.562306}{0.156179}{300}{9}
      \Td{0.562306}{0.156179}{0}{9}
      \Al{0.572949}{0.174613}{300}{8}
      \Al{0.583592}{0.156179}{240}{8}
      \C{0.572949}{0.137745}{240}{8}
      \Ar{0.555728}{0.144786}{240}{8}
      \Tl{0.566371}{0.126351}{120}{9}
      \Tl{0.566371}{0.126351}{180}{9}
      \Tl{0.600813}{0.126351}{120}{7}
      \Tl{0.600813}{0.126351}{180}{7}
      \Td{0.572949}{0.078089}{300}{7}
      \Td{0.572949}{0.078089}{0}{7}
      \Td{0.572949}{0.078089}{60}{7}
      \Td{0.555728}{0.107917}{0}{9}
      \Td{0.555728}{0.107917}{60}{9}
      \Al{0.545085}{0.126351}{0}{8}
      \C{0.566371}{0.126351}{0}{8}
      \Ar{0.568884}{0.107917}{0}{8}
      \Tl{0.579527}{0.126351}{240}{9}
      \Tl{0.579527}{0.126351}{300}{9}
      \Tl{0.579527}{0.126351}{0}{9}
      \Ar{0.590170}{0.107917}{60}{8}
      \Td{0.590170}{0.107917}{300}{9}
      \Td{0.590170}{0.107917}{0}{9}
      \Al{0.600813}{0.126351}{300}{8}
      \C{0.611456}{0.107917}{300}{8}
      \Ar{0.596748}{0.096524}{300}{8}
      \Tl{0.618034}{0.096524}{180}{9}
      \Tl{0.618034}{0.096524}{240}{9}
      \Tl{0.618034}{0.096524}{300}{9}
      \Ar{0.607391}{0.078089}{0}{8}
      \Td{0.607391}{0.078089}{240}{9}
      \Td{0.607391}{0.078089}{300}{9}
      \Al{0.628677}{0.078089}{240}{8}
      \C{0.618034}{0.059655}{240}{8}
      \Ar{0.600813}{0.066696}{240}{8}
      \Tl{0.611456}{0.048262}{120}{9}
      \Tl{0.611456}{0.048262}{180}{9}
      \Td{0.600813}{0.029828}{300}{9}
      \Td{0.600813}{0.029828}{0}{9}
      \Td{0.600813}{0.029828}{60}{9}
      \Al{0.590170}{0.048262}{0}{8}
      \Al{0.611456}{0.048262}{300}{8}
      \C{0.622099}{0.029828}{300}{8}
      \Ar{0.607391}{0.018434}{300}{8}
      \Tl{0.628677}{0.018434}{180}{9}
      \Tl{0.628677}{0.018434}{240}{9}
      \Tl{0.645898}{0.048262}{180}{7}
      \Tl{0.645898}{0.048262}{240}{7}
      \Tl{0.690983}{0.126351}{180}{5}
      \Tl{0.690983}{0.126351}{240}{5}
      \Td{0.763932}{-0.000000}{0}{5}
      \Td{0.763932}{-0.000000}{60}{5}
      \Td{0.763932}{-0.000000}{120}{5}
      \Td{0.673762}{-0.000000}{60}{7}
      \Td{0.673762}{-0.000000}{120}{7}
      \Td{0.639320}{-0.000000}{60}{9}
      \Td{0.639320}{-0.000000}{120}{9}
      \Al{0.618034}{-0.000000}{60}{8}
      \C{0.628677}{0.018434}{60}{8}
      \Ar{0.645898}{0.011393}{60}{8}
      \Tl{0.635255}{0.029828}{300}{9}
      \Tl{0.635255}{0.029828}{0}{9}
      \Tl{0.635255}{0.029828}{60}{9}
      \Ar{0.656541}{0.029828}{120}{8}
      \Td{0.656541}{0.029828}{0}{9}
      \Td{0.656541}{0.029828}{60}{9}
      \Al{0.645898}{0.048262}{0}{8}
      \C{0.667184}{0.048262}{0}{8}
      \Ar{0.669697}{0.029828}{0}{8}
      \Tl{0.680340}{0.048262}{240}{9}
      \Tl{0.680340}{0.048262}{300}{9}
      \Td{0.701626}{0.048262}{60}{9}
      \Td{0.701626}{0.048262}{120}{9}
      \Td{0.701626}{0.048262}{180}{9}
      \Al{0.690983}{0.029828}{120}{8}
      \Al{0.680340}{0.048262}{60}{8}
      \C{0.690983}{0.066696}{60}{8}
      \Ar{0.708204}{0.059655}{60}{8}
      \Tl{0.697561}{0.078089}{300}{9}
      \Tl{0.697561}{0.078089}{0}{9}
      \Tl{0.663119}{0.078089}{300}{7}
      \Tl{0.663119}{0.078089}{0}{7}
      \Tl{0.663119}{0.078089}{60}{7}
      \Td{0.708204}{0.096524}{120}{9}
      \Td{0.708204}{0.096524}{180}{9}
      \Td{0.708204}{0.096524}{240}{9}
      \Al{0.718847}{0.078089}{180}{8}
      \Al{0.697561}{0.078089}{120}{8}
      \C{0.686918}{0.096524}{120}{8}
      \Ar{0.701626}{0.107917}{120}{8}
      \Tl{0.680340}{0.107917}{0}{9}
      \Tl{0.680340}{0.107917}{60}{9}
      \Td{0.718847}{0.078089}{0}{7}
      \Td{0.718847}{0.078089}{60}{7}
      \Td{0.701626}{0.107917}{0}{9}
      \Td{0.701626}{0.107917}{60}{9}
      \Al{0.690983}{0.126351}{0}{8}
      \C{0.712269}{0.126351}{0}{8}
      \Ar{0.714782}{0.107917}{0}{8}
      \Tl{0.725425}{0.126351}{240}{9}
      \Tl{0.725425}{0.126351}{300}{9}
      \Tl{0.725425}{0.126351}{0}{9}
      \Ar{0.736068}{0.107917}{60}{8}
      \Td{0.736068}{0.107917}{300}{9}
      \Td{0.736068}{0.107917}{0}{9}
      \Al{0.746711}{0.126351}{300}{8}
      \C{0.757354}{0.107917}{300}{8}
      \Ar{0.742646}{0.096524}{300}{8}
      \Tl{0.763932}{0.096524}{180}{9}
      \Tl{0.763932}{0.096524}{240}{9}
      \Td{0.774575}{0.078089}{0}{9}
      \Td{0.774575}{0.078089}{60}{9}
      \Td{0.774575}{0.078089}{120}{9}
      \Al{0.753289}{0.078089}{60}{8}
      \Al{0.763932}{0.096524}{0}{8}
      \C{0.785218}{0.096524}{0}{8}
      \Ar{0.787731}{0.078089}{0}{8}
      \Tl{0.798374}{0.096524}{240}{9}
      \Tl{0.798374}{0.096524}{300}{9}
      \Tl{0.781153}{0.126351}{240}{7}
      \Tl{0.781153}{0.126351}{300}{7}
      \Tl{0.781153}{0.126351}{0}{7}
      \Td{0.819660}{0.096524}{60}{9}
      \Td{0.819660}{0.096524}{120}{9}
      \Td{0.819660}{0.096524}{180}{9}
      \Al{0.809017}{0.078089}{120}{8}
      \Al{0.798374}{0.096524}{60}{8}
      \C{0.809017}{0.114958}{60}{8}
      \Ar{0.826238}{0.107917}{60}{8}
      \Tl{0.815595}{0.126351}{300}{9}
      \Tl{0.815595}{0.126351}{0}{9}
      \Td{0.809017}{0.078089}{300}{7}
      \Td{0.809017}{0.078089}{0}{7}
      \Td{0.826238}{0.107917}{300}{9}
      \Td{0.826238}{0.107917}{0}{9}
      \Al{0.836881}{0.126351}{300}{8}
      \C{0.847524}{0.107917}{300}{8}
      \Ar{0.832816}{0.096524}{300}{8}
      \Tl{0.854102}{0.096524}{180}{9}
      \Tl{0.854102}{0.096524}{240}{9}
      \Tl{0.854102}{0.096524}{300}{9}
      \Ar{0.843459}{0.078089}{0}{8}
      \Td{0.843459}{0.078089}{240}{9}
      \Td{0.843459}{0.078089}{300}{9}
      \Al{0.864745}{0.078089}{240}{8}
      \C{0.854102}{0.059655}{240}{8}
      \Ar{0.836881}{0.066696}{240}{8}
      \Tl{0.847524}{0.048262}{120}{9}
      \Tl{0.847524}{0.048262}{180}{9}
      \Td{0.836881}{0.029828}{300}{9}
      \Td{0.836881}{0.029828}{0}{9}
      \Td{0.836881}{0.029828}{60}{9}
      \Al{0.826238}{0.048262}{0}{8}
      \Al{0.847524}{0.048262}{300}{8}
      \C{0.858167}{0.029828}{300}{8}
      \Ar{0.843459}{0.018434}{300}{8}
      \Tl{0.864745}{0.018434}{180}{9}
      \Tl{0.864745}{0.018434}{240}{9}
      \Tl{0.881966}{0.048262}{180}{7}
      \Tl{0.881966}{0.048262}{240}{7}
      \Td{0.909830}{-0.000000}{0}{7}
      \Td{0.909830}{-0.000000}{60}{7}
      \Td{0.909830}{-0.000000}{120}{7}
      \Td{0.875388}{-0.000000}{60}{9}
      \Td{0.875388}{-0.000000}{120}{9}
      \Al{0.854102}{-0.000000}{60}{8}
      \C{0.864745}{0.018434}{60}{8}
      \Ar{0.881966}{0.011393}{60}{8}
      \Tl{0.871323}{0.029828}{300}{9}
      \Tl{0.871323}{0.029828}{0}{9}
      \Tl{0.871323}{0.029828}{60}{9}
      \Ar{0.892609}{0.029828}{120}{8}
      \Td{0.892609}{0.029828}{0}{9}
      \Td{0.892609}{0.029828}{60}{9}
      \Al{0.881966}{0.048262}{0}{8}
      \C{0.903252}{0.048262}{0}{8}
      \Ar{0.905765}{0.029828}{0}{8}
      \Tl{0.916408}{0.048262}{240}{9}
      \Tl{0.916408}{0.048262}{300}{9}
      \Tl{0.916408}{0.048262}{0}{9}
      \Ar{0.927051}{0.029828}{60}{8}
      \Td{0.927051}{0.029828}{300}{9}
      \Td{0.927051}{0.029828}{0}{9}
      \Al{0.937694}{0.048262}{300}{8}
      \C{0.948337}{0.029828}{300}{8}
      \Ar{0.933629}{0.018434}{300}{8}
      \Tl{0.954915}{0.018434}{180}{9}
      \Tl{0.954915}{0.018434}{240}{9}
      \Td{0.965558}{-0.000000}{0}{9}
      \Td{0.965558}{-0.000000}{60}{9}
      \Td{0.965558}{-0.000000}{120}{9}
      \Al{0.944272}{-0.000000}{60}{8}
      \Al{0.954915}{0.018434}{0}{8}
      \C{0.976201}{0.018434}{0}{8}
      \Ar{0.978714}{-0.000000}{0}{8}
      \Tl{0.989357}{0.018434}{240}{9}
      \Tl{0.989357}{0.018434}{300}{9}
      \Tl{0.972136}{0.048262}{240}{7}
      \Tl{0.972136}{0.048262}{300}{7}
      \Tl{0.927051}{0.126351}{240}{5}
      \Tl{0.927051}{0.126351}{300}{5}
      \Tl{0.809017}{0.330792}{240}{3}
      \Tl{0.809017}{0.330792}{300}{3}
    \end{scope}
  \end{tikzpicture}
  \]
(c)
  \[
  \fractal
  \begin{tikzpicture}[scale=3]
    \begin{scope}[scale=.618034^2]
        \draw[ultra thick,fractalblue,->] (0,0) node[black,left] {$0$} 
                                        -- (2.618034,0) node[black,right] {$\phi^2$};
    \end{scope}
    \path (1.40,0.165396) node[font=\Large] {$\mapsto$};
    \begin{scope}[xshift=1.75cm,scale=.618034^2]
      \draw[ultra thick,fractalblue,<-] (0,0)  node[black,left] {$0$} 
                                    -- (.5, .86602540378443860)  node[black,left] {$\xi$} ;
       \draw[ultra thick,fractalblue,<-] (.5, .86602540378443860) 
                                   -- (1.5,.86602540378443860) node[black,right] {$\xi+1$} ;
      \draw[ultra thick,fractalred]  (1.5,.86602540378443860) 
                                  -- (1.618034,0) node[black,left] {$\phi$};
      \draw[ultra thick,fractalblue,->] (1.618034,0) -- (2.618034,0) node[black,right] {$\phi^2$};
    \end{scope}
    \begin{scope}[yshift=-0.6cm]
      \begin{scope}[xshift=-0.095492cm,scale=.618034^2]
        \draw[ultra thick,fractalred]  (.5, .86602540378443860)  node[black,left] {$\xi$}
                                    -- (2.618034,0) node[black,right] {$\phi^2$};
      \end{scope}
      \path (1.40,0.165396) node[font=\Large] {$\mapsto$};
      \begin{scope}[xshift=1.75cm,xshift=-0.095492cm,scale=.618034^2]
        \draw[ultra thick,fractalblue,<-] (.5, .86602540378443860)  node[black,left] {$\xi$} 
                                      -- (1.5,.86602540378443860) node[black,right] {$\xi+1$} ;
        \draw[ultra thick,fractalred]  (1.5,.86602540378443860) 
                                    -- (1.618034,0) node[black,left] {$\phi$};
        \draw[ultra thick,fractalblue,->] (1.618034,0) 
                                    -- (2.618034,0) node[black,right] {$\phi^2$};
      \end{scope}
    \end{scope}
  \end{tikzpicture}
  \]
  \caption{(a) The substitution rules for the hat fractal. All angles
    are multiples of $60^{\circ}$ and all ratios are golden.
    (b) The first few substitution steps.
    (c) The hat fractal can also be defined from a substitution on line segments.}
  \label{fig:fractal-rules}
\end{figure}

The hat fractal has several remarkable self-similarities and
symmetries. Most relevant for our purposes are the $120^{\circ}$
rotational symmetry of the curve obtained from the blue segment and
the $180^{\circ}$ rotational symmetry of the curve obtained from the
red segment (see Figure~\ref{fig:fractal-symmetries}).

\begin{figure}
  \[
  \begin{tikzpicture}[line join=round]
    \draw \thefractalbig;
    \draw[shift={(2.618033,0)},rotate=120] \thefractalbig;
    \draw[shift={(0.5*2.618033,0.866025*2.618033)},rotate=240] \thefractalbig;
    \draw (0,0) -- (2.618033,0) -- (0.5*2.618033,0.866025*2.618033) -- cycle;
  \end{tikzpicture}
  \qquad
  \begin{tikzpicture}[scale=1,line join=round]
    \draw \thefractalbigB;
    \draw (0.5,0.866025) -- (1,0) -- (2.618033,0) -- (2.118033,0.866025) -- cycle;
  \end{tikzpicture}
  \]
  \caption{Some symmetries of the hat fractal. The curve generated
    from the blue segment admits a $120^{\circ}$ symmetry, and that of
    the red segment has a $180^{\circ}$ symmetry.}
  \label{fig:fractal-symmetries}
\end{figure}
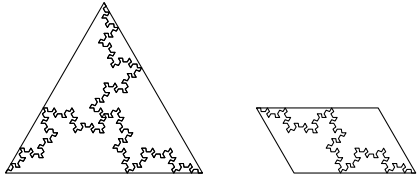

\subsection{The partition $\Plimit$ is new, but the hat fractal curve is not}

The hat fractal curve described in the previous section 
has appeared soon after the discovery of hat tilings \cite{MR4770585}.
In particular, it has been observed in the description of a partition
of the internal space associated with hat tilings \cite{zbMATH08135691}.
In fact, Figure~8 of \cite{zbMATH08135691} 
and Figure~10 from \cite{zbMATH08105556} bear an uncanny resemblance to our
Figure~\ref{fig:internal-space}, although it is not quite the same.
More precisely, the number of colors is not the same and, more importantly, the boundaries
of the partitions are not the same.
Figure~8 from \cite{zbMATH08135691} has 4 colors and was obtained by
considering the meta-tiles $T$, $H$, $P$ and $F$
following the original terminology of \cite{MR4770585}.
Since these meta-tiles are obtained by a desubstitution process,
it is possible that their fractal partition
can be obtained as the result of a Rauzy induction on our partition
\cite{MR4347332} for some hexagonal shape subwindow. 
For aperiodic tilings by Jeandel-Rao Wang tiles,
it was shown in {\cite{MR4226493}} that the sequence of 2-dimensional substitutions
obtained by a sequence of meta-tiles corresponds to the sequence of substitutions
obtained from a Rauzy induction procedure \cite{MR4347332} 
over the partitions of the window of the internal space into polygonal regions.
When the partition is made of convex polygons, it is easier to program 
using convex geometry, but the setup works for non convex (or fractal) regions as well.
We leave this question open for now.

\begin{question}
    Describe the relation between our Figure~\ref{fig:internal-space} and
    Figure~8 of \cite{zbMATH08135691}.
\end{question}

The substitution rules defining the hat fractal are also very closely related to the
``Golden Hex substitution'' considered in \cite{zbMATH08141691}. The Golden Hex
substitution was used to describe a particular member of the hat family of
tilings. Here, we use it to define a partition of the internal space associated
with the hat tilings.

\begin{question}
  Describe the relationship between the contracting graph-directed
  function system defined in Figure~\ref{fig:fractal-rules} and the
  Golden Hex substitution of \cite{zbMATH08141691}.
\end{question}

Finally, notice the hat fractal curve also appears in \cite{arXiv:2306.06512}
and in messages posted on Mathstodon in June 2023 by Pieter Mostert
\cite{2023-mostert-mathstodon}.
In these posts, a partition is presented that allows a hat
tiling to be constructed by a two-step process that the author calls a ``bad print job''.
The process involves anchors positioned somewhere in the interior of the hat tiles.
This illustrates that the placement of the anchors is fundamental.

\subsection{Description of the partition}

\begin{figure}
\begin{center}
  \begin{tabular}{cc}
    \raisebox{1cm}{
      \begin{tikzpicture}[
          scale=1.8
        ]
        \begin{scope}
        \clip (0,0) -- ++ (\Lxa,\Lxb) -- ++ (\Lya,\Lyb) -- ++ (-\Lxa,-\Lxb) -- cycle;
        \begin{pgfonlayer}{triangles}
          \clip (0,0) -- ++ (\Lxa,\Lxb) -- ++ (\Lya,\Lyb) -- ++ (-\Lxa,-\Lxb) -- cycle;
        \end{pgfonlayer}
        \foreach \a in {0,1}
        \foreach \b in {0,1}{
          \begin{scope}[xshift=\a*3.118034 cm + \b*0.809017 cm,
              yshift=\a*0.866025 cm + \b*3.133309 cm,
            ]
            \tikzstyle{every node}=[font=\normalsize]
            \FundamentalDomainAroundOrigin
            \FundamentalDomainLabellingRhombus

        \end{scope}}
        \end{scope}
        \draw[very thick,dotted] (0,0) -- ++ (\Lxa,\Lxb) -- ++ (\Lya,\Lyb) -- ++ (-\Lxa,-\Lxb) -- cycle;
    \end{tikzpicture}}
    &
    \raisebox{0cm}{
    \begin{tikzpicture}[
        scale=1.35,
      ]
      \tikzstyle{every node}=[font=\normalsize]
      \FundamentalDomainAroundOrigin
      \FundamentalDomainLabellingStandard
    \end{tikzpicture}}
    \\[-8mm]
    (a) A parallelogram-shaped fundamental domain. &
    (b) A radially symmetric fundamental domain.\\[7mm]
    \begin{tikzpicture}[
        scale=1.8,
      ]
      \tikzstyle{every node}=[font=\normalsize]
      \FundamentalDomainRecyclingShape
      \FundamentalDomainRecyclingLabelling

      \EmptyTriangleUP{0}{0}{0}
      \EmptyTriangleDOWN{\Lya}{\Lyb}{0}
      
    \end{tikzpicture}
    &
    \begin{tikzpicture}[
        scale=1.8,
      ]
      \FundamentalDomainRecyclingShapeSkeleton
    \end{tikzpicture}
    \\[3mm]
    (c) \parbox[t]{0.45\textwidth}{\raggedright A fundamental domain
    resembling the recycling symbol \recycl. It also resembles the
    logo of the Canadian Mathematical Society.} &
    (d) The boundaries of (c).
  \end{tabular}
\end{center}
  \vspace{-1em} %
  \caption{Partition of the torus $\C/\Lambda$ illustrated on some 
  fundamental domains for the action of $\Lambda$ on the internal space $\C$.
  Note that each partition of the fundamental domain includes two white triangles. }
  \label{fig:fundamental}
\end{figure}
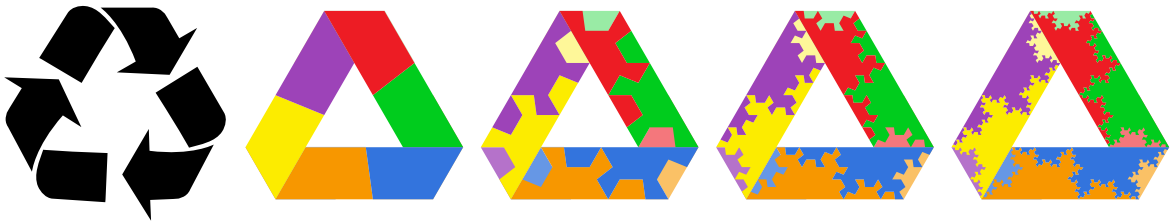
\begin{figure}
\begin{center}
    \setlength\tabcolsep{0mm}
    \begin{tabular}{lllll}
        {\resizebox{3cm}{!}{\recycl}}
        &
    \def\thefractalbig{\thefractalbigI}
    \def\thefractalsmall{\thefractalbigI}
    \begin{tikzpicture}[
        scale=0.8,
      ]
      \tikzstyle{every node}=[font=\footnotesize]
      \FundamentalDomainRecyclingShape
    \end{tikzpicture}
        &
    \def\thefractalbig{\thefractalbigII}
    \def\thefractalsmall{\thefractalbigII}
    \begin{tikzpicture}[
        scale=0.8,
      ]
      \tikzstyle{every node}=[font=\footnotesize]
      \FundamentalDomainRecyclingShape
    \end{tikzpicture}
        &
    \def\thefractalbig{\thefractalbigIII}
    \def\thefractalsmall{\thefractalbigIII}
    \begin{tikzpicture}[
        scale=0.8,
      ]
      \tikzstyle{every node}=[font=\footnotesize]
      \FundamentalDomainRecyclingShape
    \end{tikzpicture}
        &
    \def\thefractalbig{\thefractalbigIV}
    \def\thefractalsmall{\thefractalbigIV}
    \begin{tikzpicture}[
        scale=0.8,
      ]
      \tikzstyle{every node}=[font=\footnotesize]
      \FundamentalDomainRecyclingShape
    \end{tikzpicture}
    \end{tabular}
\end{center}
  \vspace{-1em} %
    \caption{A sequence of partitions converging to the fractal partition 
    $\Plimit$ of the internal space.}
\end{figure}

Figure~\ref{fig:internal-space} shows a partition 
$\Plimit=\{p_{-6},\dots,p_{-1},p_0,p_{1},\dots,p_6\}$
of the infinite
plane into 13 colors (the 12 tile colors plus the color white labelled 0). 
It is denoted with a plus $(+)$ sign because it describes only tilings where the
positively oriented hat is more frequent than the negatively oriented one. The other partition 
$\PlimitNEG$ can be obtained as its mirror image, while also swapping the signs of 
the labels.

We now describe this partition in more detail. It is obtained by the infinite
repetition of a fundamental domain. There are many possible ways of
choosing the fundamental domain, and three possibilities are shown in
Figure~\ref{fig:fundamental}. We will first focus on the fundamental
domain of Figure~\ref{fig:fundamental}(c), which is shaped like the
recycling symbol \recycl.
It consists of three parallelograms using
complementary colors (red-green, yellow-purple, and blue-orange), as
well as two white triangles. The acute angle of each parallelogram is
$60^{\circ}$. The sides of the parallelograms have lengths $\phi^2$
and $1$, respectively, where $\phi=\frac{1+\sqrt{5}}{2}$ is the golden
ratio. The white triangles have side length $\phi$. Each of the
parallelograms is divided into 4 colored regions using segments of the
hat fractal, as shown in Figure~\ref{fig:fundamental}(c).

We can also consider the parallelogram-shaped fundamental domain shown
in Figure~\ref{fig:fundamental}(a). Its two sides, in complex
coordinates, are $u=\phi^2+\xi$ and $v=\xi(\phi^2+\xi)$.
These two vectors are also the offset vectors by which the fundamental
domain repeats (this is irrespective of which presentation of the
fundamental domain is chosen). Their linear combinations with integer coefficients 
generate a lattice, which we denote 
\begin{equation}\label{eq:defLambda}
\Lambda
\,=\,\langle u,v\rangle_\Z
\,=\,\langle\phi^2+\xi,\xi(\phi^2+\xi)\rangle_\Z
\,\subseteq\, \Z[\phi,\xi]
\,\subseteq\, \C.
\end{equation}
We note that the area of the fundamental domain under translation by
this lattice is $\im u\bar v = 2\phi^2\sqrt 3$, whereas the area of
the two white triangles is $\frac{1}{2}\phi^2\sqrt 3$. Thus, the white
triangles make up exactly one quarter of the area of the fundamental
region. We also note that the long diagonal of the
parallelogram-shaped fundamental domain is
$u+v \approx 3.92705 + 3.99933i$. Its vertical offset is very close to
$4$, but not equal to it. Also, the direction of this diagonal appears
at first sight to be $45^{\circ}$, but this is not quite the case.

\subsection{Validity of the tiling}

We now prove that the partition generates valid tilings.
For every generic starting point $x\in\C/\Lambda$ in the torus,
the encoding of the shifted lattice $x+\Z[\xi]$
by the partition $\Plimit$ uniquely defines
a configuration 
\[
    w_x:\Z[\xi]\to\{0,\pm1,\pm2,\pm3,\pm4,\pm5,\pm6\}.
\]
First, we describe the possible patterns at support $\{z,z+1\}$ appearing in
the configuration $w_x$. These can be computed by listing the pair of
regions of the partition that intersect after translating one by one unit
horizontally.

\begin{lemma}\label{lem:language-at-z-and-z+1}
    Let $\Plimit=\{p_{-6},\dots,p_{-1},p_0,p_{1},\dots,p_6\}$
    be the partition shown in Figure~\ref{fig:internal-space}.
    For every $i,j\in\{0,\pm1,\pm2,\pm3,\pm4,\pm5,\pm6\}$,
    the intersection
    $(p_i+1)\cap p_j\neq\varnothing$
    is non-empty
    if and only if
    \begin{align*}
    (i,j)\in
        &\left\{
    \begin{array}{l}
        (+1, +6),
        (+2, +2),
        (+2, +5),
        (+2, +6),
        (+3, +1),
        (+3, +2),
        (+3, +4),
        (+3, +5),\\
        (+4, +2),
        (+4, +3),
        (+4, +6),
        (+5, +1),
        (+5, +5),
        (+5, +6),
        (+6, +1),\\
        (+1, -6),
        (+4, -5),
        (+5, -2),
        (+5, -3),
        (+5, -4),
        \\
            (-1, +2),
            (-2, +1),
            (-3, +4),
            (-5, +2),
            (-6, +2),\\
            (0, 0),
            (0, +1),
            (0, +3),
            (0, +4),
            (0, +6),
            (0, -1),
            (+1, 0),
            (+3, 0),
            (+4, 0),
            (+6, 0),
            (-4, 0),
    \end{array}
\right\}.
    \end{align*}
\end{lemma}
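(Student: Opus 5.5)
The plan is to reduce the statement to a finite geometric computation on the torus $\C/\Lambda$. Each region $p_i$ is $\Lambda$-periodic, so $(p_i+1)\cap p_j\neq\varnothing$ in $\C$ if and only if the images of $p_i+1$ and $p_j$ in $\C/\Lambda$ intersect. It therefore suffices to work on a single fundamental domain. For this I would use the recycling-shaped domain $D$ of Figure~\ref{fig:fundamental}(c), which consists of three parallelograms with sides $\phi^2$ and $1$ and two white triangles of side $\phi$. I would cover $D+1$ by finitely many translates $D+\lambda$ with $\lambda\in\Lambda$, namely those meeting $D+1$. Every intersecting pair $(i,j)$ then arises from a piece of $D$ shifted by $1$ and compared with a piece of $D+\lambda$ for one of these finitely many $\lambda$.

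\textbf{Step 1: polygonal skeleton.} I would first replace each fractal arc by the polygonal hull that contains it. Concretely, the arc is contained in the closed region bounded by its chord together with the triangles of Figure~\ref{fig:fractal-symmetries}: the $120^\circ$ triangle for blue arcs and the parallelogram for red arcs. Each parallelogram of $D$ then splits into regions whose color is certain, plus thin ambiguous strips around the fractal arcs. All vertices lie in $\Z[\phi,\xi]$, so the intersection tests are exact computations in that ring.

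\textbf{Step 2: positive direction.} For every pair in the list, I would exhibit a point of $(p_i+1)\cap p_j$. This can be done by locating a gray triangle (light or dark) from some finite level of the substitution in Figure~\ref{fig:fractal-rules}(b). Such a triangle is known to lie in $p_i$, and its translate by $1$ overlaps a triangle known to lie in $p_j$. Because the regions have nonempty interior and the gray triangles exhaust the complement of the curve, a finite substitution depth suffices for each of the roughly 36 pairs. Nonemptiness is then certified by an open set of witnesses.

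\textbf{Step 3: negative direction, the main obstacle.} For pairs not in the list, I must show that the intersection is empty. This is hard because near the fractal boundaries the polygonal hulls overlap, and a naive hull computation would report spurious intersections. My first approach is to use the self-similarity of the curve. A translate by $1$ of a fractal arc, compared with another arc, either reduces to the relation between a blue arc and its $120^\circ$-rotated copy, or to the red arc and its $180^\circ$ copy, as in Figure~\ref{fig:fractal-symmetries}. Under these symmetries the shifted curve coincides exactly with a boundary of the partition. In that case the two regions share the boundary and their interiors are disjoint.

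So the key claim to verify is that whenever translation by $1$ brings a fractal arc near another arc, the two arcs either coincide as sets or are separated by a positive gap. A gap is visible at a finite substitution level. The coincidence case is where the precise golden-ratio and $60^\circ$ geometry of the substitution must be used.

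Throughout, I interpret intersection up to measure-zero boundaries. That is, $p_i$ denotes an open region, consistent with the genericity assumption on $x$. I would then check the remaining cases by a finite, ideally computer-assisted, enumeration over $\lambda$ and the skeleton pieces.
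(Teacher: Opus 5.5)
Your overall strategy is the paper's. There the lemma is proved by reading the $36$ pairs off Figure~\ref{fig:internal+1}, which draws the skeleton of the recycling-shaped domain, translated by $1$, over the partition. Your reduction to one fundamental domain and your Step~2 (open witnesses found in gray triangles at finite depth) are a sound way to make the ``if'' direction rigorous.

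\textbf{The gap is in Step~3}, the only part that proves emptiness. The dichotomy ``the arcs coincide or are separated by a positive gap'' is false for this partition. Take the curve generated by the red segment from $\xi$ to $\phi^2$. It is the arc shared by the blue curve from $0$ to $\phi^2$ and by that curve's $180^\circ$ rotation about the centre of the parallelogram with vertices $0,\phi^2,\phi^2+\xi,\xi$, so it is the type of arc that divides the parallelograms. By Figure~\ref{fig:fractal-rules}(c):
\begin{itemize}
\item the curve itself consists of the blue pieces $\xi+1\to\xi$ and $\phi\to\phi^2$ and the red piece $\xi+1\to\phi$;
\item its translate by $1$ consists of the blue pieces $\xi+2\to\xi+1$ and $\phi^2\to\phi^2+1$ and the red piece $\xi+2\to\phi^2$.
\end{itemize}
The two curves meet at $\xi+1$ and at $\phi^2$ and enclose a region between them. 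So they neither coincide nor stay a positive distance apart. This is typical, since translation by $1$ sends junction points of the substitution ($\xi,\phi$) to other junction points ($\xi+1,\phi^2$).

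Partial coincidence also occurs. The red piece of that translate and the blue curve from $\phi^2$ to $\phi^2+\xi$ share the sub-arc at $\phi^2$ generated by the blue segment from $\phi^2+\xi^2/\phi^2$ to $\phi^2$. Yet their other ends are $\xi+2$ and $\phi^2+\xi$, so they must separate somewhere. Near such contact and branch points no finite level ever exhibits a gap, and the arcs do not coincide, so your procedure stalls exactly there. The coincidences that do occur come from the sub-arc decomposition of the substitution. The rotational symmetries of Figure~\ref{fig:fractal-symmetries} cannot turn a translation into a coincidence.

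\textbf{What is missing is a local argument at these points.} At a junction point, every germ of an arc is the end of a scaled, rotated blue arc. The end of a blue arc at its endpoint $q$ is invariant under $z\mapsto q+(z-q)/\phi^2$, because its last sub-arc is its own image under this map. Hence:
\begin{itemize}
\item the picture near a contact point is self-similar;
\item each germ lies in a narrow cone with apex at that point, on the bulging side of its chord (opening about $38^\circ$).
\end{itemize}
You must then check that the cones of the germs bounding $p_i+1$ and $p_j$ are angularly separated. This is a finite computation in a single annulus, and self-similarity propagates it to all scales.

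With coincidences checked sub-arc by sub-arc and this third case added, your plan would prove the ``only if'' direction more rigorously than the paper's inspection of the figure. As written, Step~3 does not go through.
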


\begin{proof}
    The proof is done by inspecting
Figure~\ref{fig:internal+1}, which shows the partition, together with
an outline of the fundamental region from
Figure~\ref{fig:fundamental}(d), shifted by 1. For example, the figure
shows that the red $+1$ region, shifted by 1, intersects only the regions
$+6$, $-6$, and the white region. 
In total, there are 36 pairs $(i,j)$ 
corresponding to intersecting pair of regions,
that is, such that $(p_i+1)\cap p_j\neq\varnothing$.
\end{proof}

Then, we can show that the configuration $w_x$ describes a valid tiling.

\begin{proposition}\label{prop:validity-of-generated-tilings}
  For every generic starting point, the configuration generated from
  the partition describes a valid tiling of the plane by the hat.
\end{proposition}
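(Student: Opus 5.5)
We need to show two things: no two tiles anchored at grid points of $x+\Z[\xi]$ with the labels given by $w_x$ overlap, and every point of the plane is covered. The plan is to reduce both properties to local conditions on finite patches of the configuration, and then check those conditions by inspecting intersections of translated regions of $\Plimit$, exactly as in Lemma~\ref{lem:language-at-z-and-z+1}.

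\textbf{Step 1: reduce validity to a kite count.} Each hat is a union of 8 kites of the kite grid underlying the triangular grid. Each kite lies in exactly one triangle and is incident to exactly one grid point, namely the vertex of that triangle at the kite's $120^{\circ}$ corner. So validity is equivalent to the following statement: every kite of the plane is covered by exactly one tile. A kite can only be covered by a tile whose anchor lies within a bounded distance $R$ of it (the hat has diameter less than 3 units). Hence whether a given kite $K$ near a grid point $z$ is covered exactly once depends only on the labels $w_x(z+d)$ for $d$ in a finite set $D\subset\Z[\xi]$ of offsets.

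\textbf{Step 2: use translation invariance to fix $z=0$.} The configuration $w_{x+z}$ is the translate of $w_x$, so it suffices to treat the 6 kites around the origin (up to choosing one representative kite per orientation class, 6 in total). For each of these kites we list the set $B$ of "bad" patterns $D\to\{0,\pm1,\dots,\pm6\}$: those that leave the kite uncovered or cover it twice. The pattern of $w_x$ at $D$ equals $(a_d)_{d\in D}$ exactly when $x\in\bigcap_{d\in D}(p_{a_d}-d)$. The claim therefore becomes: for every bad pattern this intersection contains no generic point, that is, it has empty interior. Since the fractal boundaries have measure zero and the generic points are those avoiding all translated boundaries, showing that the interior is empty suffices.

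\textbf{Step 3: prune with the two-point language, then check the remainder by computation.} It helps first to obtain the analogous admissible-pair lists for the supports $\{z,z+\xi\}$ and $\{z,z+\xi^2\}$, obtained as in Lemma~\ref{lem:language-at-z-and-z+1} by translating by $\xi$ and $\xi^2$. Then we enumerate patterns on $D$ that are locally consistent with all three pair lists. We check that each surviving pattern corresponds to a legal local configuration of hats covering the kite exactly once. If spurious patterns survive the pairwise filter, we intersect the actual translated regions directly, refining the partition into the common refinement $\bigvee_{d\in D}(\Plimit-d)$ of the torus. This is again a finite computation, controlled by the self-similar structure of the fractal: the boundaries are unions of finitely many scaled copies of the blue and red curves, and the gray triangles decide sides.

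\textbf{Main obstacle.} The expected difficulty is Step 3. We must establish rigorously, not just from a picture, that certain translated fractal regions meet only along boundaries. The first thing to try is to replace each fractal region by its polygonal inner and outer approximations from the substitution at a finite level $n$, using the gray triangles. An intersection of outer approximations that is already empty certifies emptiness. For the pairs where two regions share a fractal boundary curve, that curve is mapped to itself by a translation or by the $120^{\circ}$ and $180^{\circ}$ symmetries, and we use these symmetries to show that the two regions lie on opposite sides of one common curve.
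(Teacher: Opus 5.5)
Your reduction is sound, but your route to the no-gap half differs from the paper's. Validity is equivalent to every kite being covered exactly once. The cylinder of a bad pattern is open, and generic points are dense, so the proposition is equivalent to these cylinders being empty.

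\textbf{Overlaps.} Here your route coincides with the paper's, because overlap is a pairwise property. The paper checks that no overlapping orientation pair occurs at offsets $1$, $1+\xi$ and $2$, and uses rotational symmetry for the other directions. Offset $1$ comes from Lemma~\ref{lem:language-at-z-and-z+1}; the other two are read off the partition directly. Note that unit-step pair lists do not by themselves exclude overlaps at offsets $1+\xi$ and $2$; those pair checks must be done directly.

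\textbf{Gaps.} The paper never checks coverage locally. Instead it counts:
\begin{itemize}
\item The two white triangles occupy exactly $\frac14$ of the area of the fundamental domain.
\item By equidistribution of $x+\Z[\xi]$ in $\C/\Lambda$, a proportion $\frac34$ of grid points carry a tile.
\item Since $\frac34\cdot 8=6$ kites per grid point, the average coverage is exactly $1$.
\item Non-overlap gives coverage at most $1$ everywhere.
\item A gap at a generic point persists on an $\epsilon$-neighbourhood of the finitely many relevant torus points, so it recurs with positive density. That would push the average below $1$, a contradiction.
\end{itemize}
This way the paper needs only a few pairwise emptiness checks and one polygonal area computation, and no information about the individual fractal regions.

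Your route instead has to show, for each of the six kite classes, that the translated regions $p_o-z$ (over all anchor--orientation pairs covering that kite) fill the torus up to boundaries. That is a covering statement. Outer approximations cannot certify it. Along fractal arcs it requires proving that adjacent covering regions share their boundary arcs exactly, so that no sliver is left between them. This is stronger than the ``opposite sides of one curve'' disjointness you sketch, and it is the hardest part of your ``main obstacle''. The area argument removes it entirely. In exchange, your route is purely local and avoids the equidistribution input, but the computation is much heavier.

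A minor slip: a kite's grid vertex is its $60^\circ$ corner, where six kites meet. The $120^\circ$ corner lies at a triangle centroid.
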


\begin{proof}
  We must show that no two tiles generated by the partition overlap,
  and that there are no gaps between the tiles. We first show the
  no-overlap property. Consider a tile anchored at $z\in\C$ and another
  potential tile anchored at $z+1$. We show by case distinction that these
  tiles do not overlap.

More precisely, 
given the tile placed at a generic position $z\in\C$,
we deduce from Lemma~\ref{lem:language-at-z-and-z+1}
what are the possible tiles at position $z+1$ in the language 
of the configurations described by the toral $\Z^2$-action coded by the partition. 
The possible patterns made of two tiles are listed in Figure~\ref{fig:overlap-pairs} 
(thus ignoring those using label 0).
We observe that all pairs of patterns listed in Lemma~\ref{lem:language-at-z-and-z+1}
are non-overlapping.

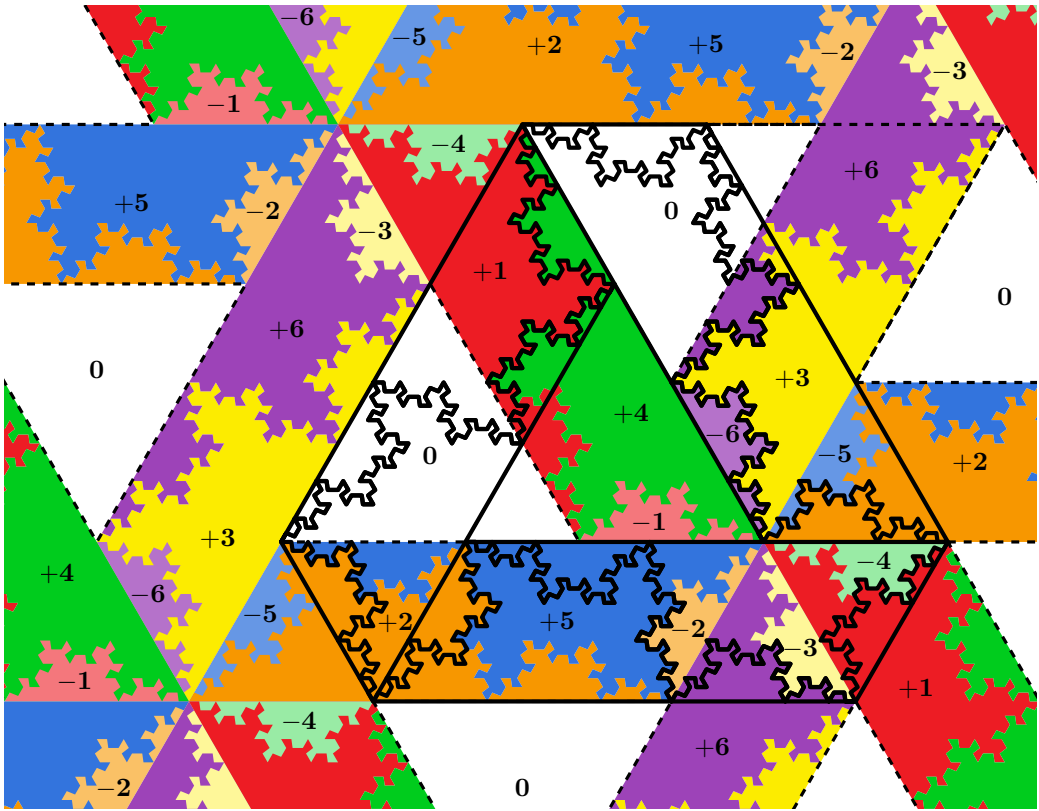
\begin{figure}
\begin{center}
\begin{tikzpicture}[
        scale=2.45
    ]
    \tikzstyle{every node}=[font=\normalsize]

    \clip (-1,-.58) rectangle (4.6, 3.78);
    \begin{pgfonlayer}{triangles}
      \clip (-1,-.58) rectangle (4.6, 3.78);
    \end{pgfonlayer}
    
    \foreach \a in {-1,0,1,2,3}
    \foreach \b in {-1,0,1,2}{
    \begin{scope}[xshift=\a*\Lxa cm + \b*\Lya cm,
                yshift=\a*\Lxb cm + \b*\Lyb cm]
                \FundamentalDomainAroundOrigin
    \end{scope}}

    \begin{scope}
      \clip (-0.8,-.52) rectangle (4.6, 3.77);
      \foreach \a in {-1,0,1,2,3}
      \foreach \b in {-1,0,1,2}{
        \begin{scope}[xshift=\a*\Lxa cm + \b*\Lya cm,
          yshift=\a*\Lxb cm + \b*\Lyb cm]
          \FundamentalDomainLabellingFigureTen
        \end{scope}}
    \end{scope}

    \begin{scope}[xshift=1cm,line width=1.6pt]
        \FundamentalDomainRecyclingShapeSkeleton
    \end{scope}
\end{tikzpicture}
\end{center}
\caption{Translating the fundamental region by the vector $1$}
\label{fig:internal+1}
\end{figure}
\begin{figure}
\def\pairwhitedots{
    \node[whitedot] at (0,0) {};
    \node[whitedot] at (1,0) {};
}
\def\scale{0.7}
\[
\begin{array}{c|l|l}
  \text{Tile at $z$} & \text{Tile at $z+1$} & \text{Tile pair pattern}\\
  \hline
    +1 & \{+6, -6\}&
    \begin{tikzpicture}[scale=\scale,baseline={(0,0)}] \smithsmall{0}{0}{+1}\smithsmall{1}{0}{+6}\pairwhitedots\end{tikzpicture}\quad
    \begin{tikzpicture}[scale=\scale,baseline={(0,0)}] \smithsmall{0}{0}{+1}\smithsmall{1}{0}{-6}\pairwhitedots\end{tikzpicture}
  \\
    +2 & \{+2, +5, +6\}&
    \begin{tikzpicture}[scale=\scale,baseline={(0,0)}] \smithsmall{0}{0}{+2}\smithsmall{1}{0}{+2}\pairwhitedots\end{tikzpicture}\quad
    \begin{tikzpicture}[scale=\scale,baseline={(0,0)}] \smithsmall{0}{0}{+2}\smithsmall{1}{0}{+5}\pairwhitedots\end{tikzpicture}\quad
    \begin{tikzpicture}[scale=\scale,baseline={(0,0)}] \smithsmall{0}{0}{+2}\smithsmall{1}{0}{+6}\pairwhitedots\end{tikzpicture}
  \\
    +3 & \{+1, +2, +4, +5\}&
    \begin{tikzpicture}[scale=\scale,baseline={(0,0)}] \smithsmall{0}{0}{+3}\smithsmall{1}{0}{+1}\pairwhitedots\end{tikzpicture}\quad
    \begin{tikzpicture}[scale=\scale,baseline={(0,0)}] \smithsmall{0}{0}{+3}\smithsmall{1}{0}{+2}\pairwhitedots\end{tikzpicture}\quad
    \begin{tikzpicture}[scale=\scale,baseline={(0,0)}] \smithsmall{0}{0}{+3}\smithsmall{1}{0}{+4}\pairwhitedots\end{tikzpicture}\quad
    \begin{tikzpicture}[scale=\scale,baseline={(0,0)}] \smithsmall{0}{0}{+3}\smithsmall{1}{0}{+5}\pairwhitedots\end{tikzpicture}
  \\
    +4 & \{+2, +3, +6, -5\} &
    \begin{tikzpicture}[scale=\scale,baseline={(0,0)}] \smithsmall{0}{0}{+4}\smithsmall{1}{0}{+2}\pairwhitedots\end{tikzpicture}\quad
    \begin{tikzpicture}[scale=\scale,baseline={(0,0)}] \smithsmall{0}{0}{+4}\smithsmall{1}{0}{+3}\pairwhitedots\end{tikzpicture}\quad
    \begin{tikzpicture}[scale=\scale,baseline={(0,0)}] \smithsmall{0}{0}{+4}\smithsmall{1}{0}{+6}\pairwhitedots\end{tikzpicture}\quad
    \begin{tikzpicture}[scale=\scale,baseline={(0,0)}] \smithsmall{0}{0}{+4}\smithsmall{1}{0}{-5}\pairwhitedots\end{tikzpicture}
  \\
    +5 & \begin{array}[t]{cc} \{+1, +5, +6,\\ -2, -3, -4\}\end{array} &
    \begin{tikzpicture}[scale=\scale,baseline={(0,0)}] \smithsmall{0}{0}{+5}\smithsmall{1}{0}{+1}\pairwhitedots\end{tikzpicture}\quad
    \begin{tikzpicture}[scale=\scale,baseline={(0,0)}] \smithsmall{0}{0}{+5}\smithsmall{1}{0}{+5}\pairwhitedots\end{tikzpicture}\quad
    \begin{tikzpicture}[scale=\scale,baseline={(0,0)}] \smithsmall{0}{0}{+5}\smithsmall{1}{0}{+6}\pairwhitedots\end{tikzpicture}\quad
    \begin{tikzpicture}[scale=\scale,baseline={(0,0)}] \smithsmall{0}{0}{+5}\smithsmall{1}{0}{-2}\pairwhitedots\end{tikzpicture}\quad
    \begin{tikzpicture}[scale=\scale,baseline={(0,0)}] \smithsmall{0}{0}{+5}\smithsmall{1}{0}{-3}\pairwhitedots\end{tikzpicture}\quad
    \begin{tikzpicture}[scale=\scale,baseline={(0,0)}] \smithsmall{0}{0}{+5}\smithsmall{1}{0}{-4}\pairwhitedots\end{tikzpicture}
  \\
    +6 & \{+1\}&
    \begin{tikzpicture}[scale=\scale,baseline={(0,0)}] \smithsmall{0}{0}{+6}\smithsmall{1}{0}{+1}\pairwhitedots\end{tikzpicture}
  \\
\end{array}
\]

\[
\begin{array}[t]{c|l|l}
    \text{Tile at $z$} & \text{Tile at $z+1$} & \text{Tile pair pattern}\\
    \hline
    &&\\[-2mm]
    -1 & \{+2\}&
    \begin{tikzpicture}[scale=\scale,baseline={(0,0)}] \smithsmall{0}{0}{-1}\smithsmall{1}{0}{+2}\pairwhitedots\end{tikzpicture}
        \\
    -2 & \{+1\}&
    \begin{tikzpicture}[scale=\scale,baseline={(0,0)}] \smithsmall{0}{0}{-2}\smithsmall{1}{0}{+1}\pairwhitedots\end{tikzpicture}
        \\
    -3 & \{+4\}&
    \begin{tikzpicture}[scale=\scale,baseline={(0,0)}] \smithsmall{0}{0}{-3}\smithsmall{1}{0}{+4}\pairwhitedots\end{tikzpicture}
        \\
\end{array}
\qquad
\begin{array}[t]{c|l|l}
    \text{Tile at $z$} & \text{Tile at $z+1$} & \text{Tile pair pattern}\\
    \hline
    &&\\[2mm]
    -4 & \varnothing               & \text{nothing to check}\\[2mm]
    -5 & \{+2\}&
    \begin{tikzpicture}[scale=\scale,baseline={(0,0)}] \smithsmall{0}{0}{-5}\smithsmall{1}{0}{+2}\pairwhitedots\end{tikzpicture}
        \\[-1mm]
    -6 & \{+2\}&
    \begin{tikzpicture}[scale=\scale,baseline={(0,0)}] \smithsmall{0}{0}{-6}\smithsmall{1}{0}{+2}\pairwhitedots\end{tikzpicture}
        \\
\end{array}
\]
\caption{Proof of non-overlap between tiles anchored at $z$ and
  $z+1$. We do not include anchor points labelled $0$, since they have
  no tile attached to them.}
\label{fig:overlap-pairs}
\end{figure}

Next, we show the analogous properties for anchors at offset
$1+\xi$. Consider a tile $A$ anchored at $z$ and a tile $B$ anchored
at $z+1+\xi$. An easy case distinction shows that the only overlapping
pairs of such tiles $(A,B)$ are $(+1,+4)$, $(+1,-6)$, $(+1,-5)$,
$(+6,-5)$, $(-2,-5)$, $(-2,+3)$, $(-2,+4)$, and $(-3,+4)$ (see
Figure~\ref{fig:overlaps}(a)).  It is easy to see from
Figure~\ref{fig:internal-space} that none of these overlapping pairs
occur in tilings generated from our partition: indeed, if we shift the
$+1$ region by $1+\xi$, it lies entirely outside of the $+4$, $-6$,
and $-5$ regions, and similarly for the other cases.

\begin{figure}
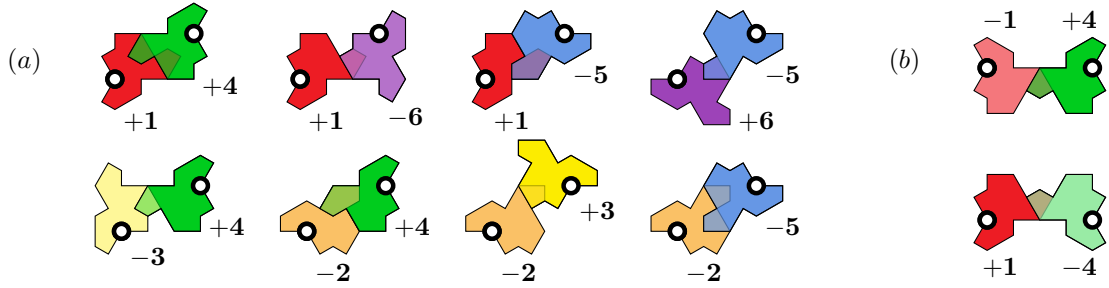

  \def\scale{0.7}
  \tikzstyle{every node}=[font=\relax]
  \[
  \begin{array}{ccccccc}
    (a)\quad
    & \overlapA{+1}{+4}{70}{(0.5,-0.8)}{(0.5,-1.0)}
    & \overlapA{+1}{-6}{80}{(0.5,-0.8)}{(0.5,-1.6)}
    & \overlapA{+1}{-5}{70}{(0.5,-0.8)}{(0.5,-0.8)}
    & \overlapA{+6}{-5}{70}{(1.5,-0.8)}{(0.5,-0.8)}
    & \qquad (b) \quad
    & \overlapB{-1}{+4}{60}{(0.25,0.9)}{(-0.25,0.9)}
    \\
    & \overlapA{-3}{+4}{40}{(0.5,-0.5)}{(0.5,-0.8)}
    & \overlapA{-2}{+4}{40}{(0.5,-0.8)}{(0.5,-0.8)}
    & \overlapA{-2}{+3}{70}{(0.5,-0.8)}{(0.5,-0.5)}
    & \overlapA{-2}{-5}{70}{(0.5,-0.8)}{(0.5,-0.8)}
    &
    & \overlapB{+1}{-4}{70}{(0.25,-0.9)}{(-0.25,-0.9)}
  \end{array}
  \]
  \caption{(a) Potential overlap between tiles anchored at $0$ and
    $1+\xi$. (b) Potential overlap between tiles anchored at $0$
    and $2$.}
  \label{fig:overlaps}
\end{figure}

  Next, we must show that there are no overlapping tiles at offset
  2. Suppose tile $A$ is anchored at $z$ and tile $B$ is anchored at
  $z+2$. If they overlap, their orientations must either be
  $(-1,+4)$ or $(+1,-4)$ (see Figure~\ref{fig:overlaps}(b)).
  Again, it is easy to see from Figure~\ref{fig:internal-space} that
  there are no such overlapping pairs: for example, the $-1$ region
  shifted by an offset of $2$ does not overlap the $+4$ region.

  By rotational symmetries, this shows that no two tiles overlap if
  their anchors are at distance 2 or less. Tiles whose anchors are
  more than 2 units apart cannot overlap at all; so there are no
  overlaps in a tiling generated by the partition.

  To show that there are no gaps between the tiles, assume there was
  such a gap. Consider the colors of the tiles surrounding the gap (up
  to some fixed but sufficiently large distance). The colors of these
  tiles are generated by the corresponding colorings of finitely many
  points $z_1,\ldots,z_n$ in the partition $\Plimit$. Since we assumed
  the grid to be in general position, none of these points lie on the
  partition boundaries. Therefore, there exists some $\epsilon>0$ such
  that the $\epsilon$-neighborhoods of $z_1,\ldots,z_n$ do not overlap
  any partition boundaries. This implies that there must be infinitely
  many places where there is a gap in the tiling, and these places
  must, in the limit, make up some positive fraction of the anchor
  points. Then some positive fraction of kites is not covered by a
  tile.

  On the other hand, we noted that the area of the white triangles in
  the partition is exactly 1/4 of the total area. This means that in
  our tiling, in the limit, exactly 3/4 of the grid points must anchor
  a tile. Since each tile covers 8 kites and each anchor point adjoins
  6 kites, this implies that each kite is, on average, covered by
  exactly 1 tile, contradicting the previous paragraph. Hence there are no
  gaps in the tiling.
\end{proof}

So far, we have assumed a ``generic'' starting point, i.e., such that
no grid points fall on partition boundaries. When the starting point
is not generic,
a point on a partition boundary may correspond to more than one valid
tiling.  A choice can be made as to which set of the partition it
belongs to.  Other points in its orbit may also fall on the boundary
where a consistent choice must be made. This is formalized in
Section~\ref{sec:Markov-partition} below as a symbolic dynamical system defined
from a topological partition of the space into open sets. 

The subset of points of an
orbit intersecting the partition boundary describes what is a called a
\emph{Conway worm}, and the choices of tiles are called its
\emph{resolutions} \cite{MR1355301}; see also \cite[10.5.8]{MR857454}
and \cite[Figure 7.22]{MR3136260}. For many tile sets, Conway worms are within
bounded distance of a straight line and can be described by their slope
\cite{MR4730985}.  But, in the context of the hat tilings, it is known
that they may have fractal (\text{snake}) or linear (Conway worm)
shape \cite{PhysRevB.108.224109},
see also Figure 49 in \cite{tatham_finite-state_2025}.
Their fractal vs.\@ linear nature can be explained by the fractal and linear
nature of the partition's sets, and by the subsets of orbits under the
$\Z^2$-action $\Rhat$ by horizontal and vertical unit translation on the torus
$\R^2/\Lambda$ that remain in its boundary.

\section{A Markov partition for the hat tilings}

The goal of this section is to show that the partition $\Plimit$ is in fact
a Markov partition for the dynamical system 
$\dynsys{\Z^2}{\Rhat}{\C/\Lambda}$ defined below in \eqref{eq:z2-action-hat}.

\subsection{Markov partition}\label{sec:Markov-partition}

Markov partitions were originally defined for one-dimensional dynamical
systems $\dynsys{\Z}{R}{\T^2}$ and were extended to $\Z^d$-actions by automorphisms of
compact Abelian groups in \cite{MR1632169}.
Following \cite{MR4213162,MR4347332}, we use the notion of Markov partition
proposed in \cite[\S 6.5]{MR1369092} for dynamical systems defined by 
a $\Z^2$-action on a torus.

Let $M$ be a compact metric space.
Consider $\dynsys{\Z^2}{R}{M}$, a continuous $\Z^2$-action on $M$ where
$R\colon \Z^2\times M\to M$.
For some finite set $\Acal$,
a \emph{topological partition} of $M$ is a
collection $\{P_a\}_{a\in\Acal}$ of disjoint open sets $P_a\subset M$
such that $M = \bigcup_{a\in\Acal} \overline{P_a}$. If $S\subset\Z^2$ is a finite set,
we say that a pattern $w\in\Acal^S$
is \emph{allowed} for $\Pcal,R$ if
\begin{equation}\label{eq:allowed-if-nonempty}
	\bigcap_{\bk\in S} R^{-\bk}(P_{w_\bk}) \neq \varnothing.
\end{equation}

Let us recall that a $\Z^2$-\emph{subshift} is a set of the form $X \subset \Acal^{\Z^2}$ which is closed in the prodiscrete topology and invariant under the shift action.
Let $\Lcal_{\Pcal,R}$ be the collection of all allowed patterns for $\Pcal,R$.
The set $\Lcal_{\Pcal,R}$ is the language of a subshift 
$\Xcal_{\Pcal,R}\subseteq\Acal^{\Z^2}$ 
\cite[Prop.~9.2.4]{MR3525488}.
The subshift $\Xcal_{\Pcal,R}$ is a \emph{symbolic dynamical system} defined
from the dynamical system $\dynsys{\Z^2}{R}{M}$ and the partition $\Pcal$.

For every configuration $x\in\Xcal_{\Pcal,R}$ and $m\geq 0$, there is a
corresponding nonempty open set
\[
D_m(x) = \bigcap_{\Vert\bk\Vert_{\infty} \leq m} R^{-\bk}(P_{x_\bk}) \subset M.
\]
The sequence of compact closures $(\overline{D}_m(x))_{m \in \N}$ of these sets is nested and thus it follows that their intersection is nonempty. Notice that there is no reason why $\operatorname{diam}(\overline{D}_m(x))$ should converge to zero, and thus the intersection could contain more than one point. In order for $\Xcal_{\Pcal,R}$ to capture the dynamics of $\dynsys{\Z^2}{R}{M}$, this intersection should contain only one point.
This leads to the following definition.

\begin{definition}
    [\cite{MR1369092}]
    \label{def:toppartition}
	A topological partition $\Pcal$ of $M$ gives a \emph{symbolic representation} $\Xcal_{\Pcal,R}$ of $\dynsys{\Z^2}{R}{M}$
	if for every $x\in\Xcal_{\Pcal,R}$ the intersection
	$\bigcap_{m=0}^{\infty}\overline{D}_m(x)$ consists of exactly one
	point $\rho \in M$.
	We call $x$ a \emph{symbolic representation of $\rho$}. 
\end{definition}

\begin{definition}
    [\cite{MR1369092,MR4213162,MR4347332}]
    \label{def:Markov}
A topological partition $\Pcal$ of $\T^2$ is a \emph{Markov partition} for
$\dynsys{\Z^2}{R}{\T^2}$ if 
\begin{itemize}
    \item $\Pcal$ gives a symbolic representation of $\dynsys{\Z^2}{R}{\T^2}$ and 
    \item $\Xcal_{\Pcal,R}$ is a (2-dimensional) shift of finite type (SFT).
\end{itemize}
\end{definition}

\subsection{A Markov partition for the hat tilings}

The proof of existence of a Markov partition for Jeandel-Rao tilings
was the culmination of a work split into three articles 
\cite{MR4226493,MR4347332,MR4213162}.
Here, the proof is simpler because we can use results already known 
for the hat tilings.
Recall that, in any tiling by the hat, the ratio of frequencies of the two
orientations of the hat is $[1:\phi^4]$ or $[\phi^4:1]$ \cite{MR4770585}.
More precisely, the set of tilings by the hat splits into two disjoint components
\[
\Omega_\text{hat}
=
\Omega_\text{hat}^+
\cup
\Omega_\text{hat}^-,
\]
where $\Omega_\text{hat}^+$ is the component where positive orientations are more frequent,
that is, tilings with more hats than anti-hats.
It is known that each component $\Omega_\text{hat}^+$ and $\Omega_\text{hat}^-$
defines a minimal dynamical system for the shift action \cite{MR4770585,zbMATH08135691}.
Recall that a subshift $X\subseteq\Acal^{\Z^2}$ is \emph{minimal} if 
it does not contain any proper nonempty subshift,
that is, $Y\subseteq X$ implies $Y=X$ for every nonempty subshift $Y$.

On the torus $\C/\Lambda$ where $\Lambda$ is the lattice defined in
\eqref{eq:defLambda}, we consider the following $\Z^2$-action $\Rhat$
acting by horizontal and vertical unit steps:
\begin{equation}\label{eq:z2-action-hat}
\begin{array}{rccl}
    \Rhat:&\Z^2\times\C/\Lambda & \to & \C/\Lambda\\
    &(\bk,\bx) & \mapsto &\bx+\bk.
\end{array}
\end{equation}
The $\Z^2$-action $\Rhat$ defines a dynamical system $\dynsys{\Z^2}{\Rhat}{\C/\Lambda}$
and we have the following theorem about the symbolic dynamical system $\Xcal_{\Plimit,\Rhat}$
determined by the partition $\Plimit$.

\begin{theorem}
    \label{thm:equality-holds-we-describe-all-tilings}
    We have $\Xcal_{\Plimit,\Rhat}=\Omega_\text{hat}^+$.
\end{theorem}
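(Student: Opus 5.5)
The plan is to prove the two inclusions separately, using minimality of $\Omega_\text{hat}^+$ for the harder one, so that only one nonempty subshift inside it has to be exhibited. Throughout, configurations in $\Z^{\Z^2}$ are identified with labellings of $\Z[\xi]$ via $(a,b)\mapsto a+b\xi$. The action \eqref{eq:z2-action-hat} should be read accordingly: the generators act by translation by $1$ and by $\xi$ on $\C/\Lambda$. This identification is the convention in which Lemma~\ref{lem:language-at-z-and-z+1} and Proposition~\ref{prop:validity-of-generated-tilings} are phrased.

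\textbf{Step 1: $\Xcal_{\Plimit,\Rhat}\subseteq\Omega_\text{hat}^+$.} Take $w\in\Xcal_{\Plimit,\Rhat}$. Every finite pattern of $w$ is allowed, so by \eqref{eq:allowed-if-nonempty} it occurs as a pattern of the nonempty open set $\bigcap_{\bk\in S}\Rhat^{-\bk}(P_{w_\bk})$. Because the partition boundaries (finitely many segments and fractal curves) have measure zero, this open set contains a generic point $x$. Hence each finite pattern of $w$ is a pattern of some $w_x$ with $x$ generic. By Proposition~\ref{prop:validity-of-generated-tilings}, $w_x$ describes a valid hat tiling, so each finite pattern of $w$ is a legal patch: it has no overlaps, and every kite in the interior of its support is covered. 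Legality of all finite windows implies validity of $w$ by compactness; more precisely, the no-overlap and no-gap conditions are local, with radius at most $2$ as in the proof of Proposition~\ref{prop:validity-of-generated-tilings}. So $w$ describes a hat tiling.

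It remains to show that this tiling lies in the $+$ component. The frequency of each label in $w_x$ is the area of the corresponding region of $\Plimit$ divided by the area of $\C/\Lambda$, by unique ergodicity of the irrational toral translation. This requires checking that $\Z[\xi]$ is dense in $\C/\Lambda$, i.e.\ that $\Lambda+\Z[\xi]$ is dense in $\C$, which follows because $\phi\notin\Q$. The negative regions have strictly smaller total area than the positive ones (the expected ratio is $1:\phi^4$), so hats outnumber anti-hats in $w_x$. Since $w$ lies in the orbit closure of such $w_x$, and $\Omega_\text{hat}^\pm$ are disjoint closed invariant sets, $w\in\Omega_\text{hat}^+$.

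\textbf{Step 2: $\Omega_\text{hat}^+\subseteq\Xcal_{\Plimit,\Rhat}$.} The set $\Xcal_{\Plimit,\Rhat}$ is a nonempty subshift; it contains $w_x$ for any generic $x$. By Step 1 it is contained in $\Omega_\text{hat}^+$. Since $\Omega_\text{hat}^+$ is minimal \cite{MR4770585,zbMATH08135691}, any nonempty subshift of it equals it, so $\Xcal_{\Plimit,\Rhat}=\Omega_\text{hat}^+$.

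\textbf{Main obstacle.} The delicate point is Step 1, and specifically showing that $w$ lies in the $+$ component rather than merely being some hat tiling. A limit of tilings could a priori switch component, but it cannot, because the components are closed and disjoint. The real computation is the area count of the negative regions inside the recycling-shaped fundamental domain. The fractal boundaries have measure zero, so each region's area can be computed from the limiting triangle decompositions of Figure~\ref{fig:fractal-rules}. The aim is to show that the six negative regions have total area $\frac{3}{4}\cdot\frac{1}{1+\phi^4}$ of the domain.

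If that computation proves awkward, a fallback is available. Locate in some generic $w_x$ a finite patch that occurs only in $\Omega_\text{hat}^+$, for instance a supertile of a sufficiently high level in the $+$ orientation. Such a patch forces every configuration of $\Xcal_{\Plimit,\Rhat}$ that contains it into $\Omega_\text{hat}^+$. By minimality of the toral action, every $w\in\Xcal_{\Plimit,\Rhat}$ contains this patch.
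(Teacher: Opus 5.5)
Your proof is correct and follows the paper's route. The inclusion $\Xcal_{\Plimit,\Rhat}\subseteq\Omega_\text{hat}^+$ comes from Proposition~\ref{prop:validity-of-generated-tilings}, and equality then follows from nonemptiness of $\Xcal_{\Plimit,\Rhat}$ and minimality of $\Omega_\text{hat}^+$. Your Step~1 is more careful than the paper, which passes from the generic configurations $w_x$ to all of $\Xcal_{\Plimit,\Rhat}$, and places them in the $+$ component, without comment; your locality/measure-zero argument and your area-frequency (or distinguishing-patch) argument supply exactly those two omitted justifications.
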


\begin{proof}
    From Proposition~\ref{prop:validity-of-generated-tilings},
    the configurations
    $\Z[\xi]\to\{0,\pm1,\pm2,\pm3,\pm4,\pm5,\pm6\}$
    in the set $\Xcal_{\Plimit,\Rhat}$ describe valid tilings.
    Thus, $\Xcal_{\Plimit,\Rhat}\subseteq\Omega_\text{hat}^+$.
    It is known that $\Omega_\text{hat}^+$ is a minimal dynamical system 
    \cite{MR4770585,zbMATH08135691}.
    We have that $\Xcal_{\Plimit,\Rhat}\neq\varnothing$ is a nonempty subshift.
    Therefore, from the minimality of $\Omega_\text{hat}^+$,
    we deduce that the equality $\Xcal_{\Plimit,\Rhat}=\Omega_\text{hat}^+$ holds.
\end{proof}

In lay terms, the above result can be summarized as saying that not
only does our partition generate valid hat tilings, but
conversely, all valid hat tilings with more hats than anti-hats 
can be generated by our partition (subject to the above provisos about
disambiguating points that fall on the partition boundaries).

\begin{theorem}
    \label{thm:is-a-markov-partition}
    $\Plimit$ is a Markov partition for the dynamical system 
    $\dynsys{\Z^2}{\Rhat}{\C/\Lambda}$.
\end{theorem}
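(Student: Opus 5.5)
We must verify the two conditions of Definition~\ref{def:Markov}: that $\Plimit$ gives a symbolic representation of $\dynsys{\Z^2}{\Rhat}{\C/\Lambda}$, and that $\Xcal_{\Plimit,\Rhat}$ is an SFT. The plan is to treat these separately, using Theorem~\ref{thm:equality-holds-we-describe-all-tilings} to transfer known facts about hat tilings to the symbolic side. Throughout, we regard $\Plimit$ as the topological partition formed by the interiors of its 13 regions. This is legitimate because the region boundaries (fractal curves and segments) are closed sets of measure zero, so the closures of the interiors still cover the torus.

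\textbf{SFT property.} By Theorem~\ref{thm:equality-holds-we-describe-all-tilings}, $\Xcal_{\Plimit,\Rhat}=\Omega_\text{hat}^+$, so it suffices to show that $\Omega_\text{hat}^+$, encoded as configurations $\Z[\xi]\to\{0,\pm1,\dots,\pm6\}$, is an SFT. The set $\Omega_\text{hat}$ of all hat tilings is an SFT with a finite window. Forbid every pair of labels at anchors within distance $2$ whose tiles overlap, as enumerated in Figures~\ref{fig:overlap-pairs} and \ref{fig:overlaps}. In addition, for every kite, forbid every pattern on the finitely many anchors that could cover that kite in which the kite is covered by no tile. The main difficulty is separating $\Omega_\text{hat}^+$ from $\Omega_\text{hat}^-$ by local rules. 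I would do this by noting that the two components are disjoint, closed and shift-invariant, and that $\Omega_\text{hat}^-$ is the mirror image of $\Omega_\text{hat}^+$ with signs swapped. Each component is then a union of connected pieces of an SFT, and I would argue that some finite pattern occurs in every tiling of one component and never in the other. Here I would use the known hierarchical structure of \cite{MR4770585}: every hat tiling decomposes uniquely into metatiles in which the anti-hats are the reflected tiles, so the ratio $[1:\phi^4]$ versus $[\phi^4:1]$ is detected locally by which orientation class is isolated (every anti-hat is surrounded by hats). Concretely, the forbidden patterns would be the neighborhoods in which two tiles of the same sign as the rarer class are adjacent. A cleaner alternative is available: a clopen, shift-invariant subset of an SFT, defined by excluding the finitely many patterns that witness the other component, is itself an SFT. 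One then checks, via compactness and the disjointness of the two components, that such a finite witness set exists. I expect this step to be the main obstacle, and I would try the "isolated anti-hat" local characterisation first.

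\textbf{Symbolic representation.} Fix $x\in\Xcal_{\Plimit,\Rhat}$ and suppose $\rho,\rho'\in\bigcap_m\overline{D}_m(x)$. Then every pattern of $x$ is compatible with both $\rho$ and $\rho'$. Set $t=\rho'-\rho$. I would show that the coding map is injective modulo boundary ambiguity, in the following sense. If $t\notin\Lambda$, then since $\Z^2$ acts minimally on $\C/\Lambda$ (the lattice $\Lambda$ contains irrational directions relative to $\Z[\xi]$, as $\phi\notin\mathbb{Q}$), for a generic $y$ the codings of $y$ and $y+t$ differ at some $\bk$ with $\Vert\bk\Vert_\infty\le m$, for an $m$ that is uniform in a neighbourhood of $y$. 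This uses that each region $p_i+t$ differs from $p_i$ on a set of positive measure, because no nonzero translate of the partition modulo $\Lambda$ equals itself. Applying this to the closures $\overline{D}_m(x)$, whose diameters are controlled by the recognisability (unique composition) of hat tilings, forces $t=0$. Equivalently, the diameters of the $\overline{D}_m(x)$ tend to $0$, because the return sets of a minimal rotation shrink. I would conclude by combining the two conditions.
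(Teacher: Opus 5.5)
\textbf{SFT half.} Your overall structure is the paper's. The SFT property is transported from $\Omega_\text{hat}^+$ via Theorem~\ref{thm:equality-holds-we-describe-all-tilings}, and your first route (forbid adjacent tiles of the minority sign) is exactly the paper's sketch. Your clopen alternative is sound and tighter than what the paper writes. Since $\Omega_\text{hat}^{+}$ and $\Omega_\text{hat}^{-}$ are disjoint closed subshifts covering $\Omega_\text{hat}$, the set $\Omega_\text{hat}^{-}$ is compact and open in $\Omega_\text{hat}$. It is therefore $\Omega_\text{hat}$ intersected with finitely many cylinders $[w_1],\dots,[w_n]$. By shift invariance, forbidding $w_1,\dots,w_n$ carves out exactly $\Omega_\text{hat}^{+}$, with no appeal to an ``isolated anti-hat'' property. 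So this step is not the main obstacle you feared.

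\textbf{Symbolic-representation half.} This is where the proposal has a gap. The paper does not argue directly. It cites Lemma~3.4 of \cite{MR4213162}, whose hypotheses are precisely the two ingredients you name: minimality of the action, and an atom invariant only under the trivial translation. However, your step concluding $t=0$ is not an argument as written:
\begin{itemize}
\item Recognisability of hat tilings gives no control over the sets $\overline{D}_m(x)$ and is not needed.
\item ``Return sets of a minimal rotation shrink'' implies nothing about $\operatorname{diam}\overline{D}_m(x)$.
\item A statement about \emph{generic} $y$ does not by itself say anything about $\rho,\rho'$, which may lie on partition boundaries.
\end{itemize}

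The missing step is short. Suppose $t\neq 0$. Apply your positive-measure observation to the one atom that $t$ moves; you only have, and only need, one such atom, not ``each region''. Together with the nullity of the boundaries, this gives $a\neq b$ with $U=P_a\cap(P_b-t)$ nonempty and open. By minimality some $\bk$ has $\rho+\bk\in U$. Since $\rho+\bk\in\overline{P_{x_\bk}}$, and the open set $P_a$ is disjoint from $\overline{P_c}$ for every $c\neq a$, this forces $x_\bk=a$. Likewise $\rho'+\bk=\rho+\bk+t\in P_b$ forces $x_\bk=b$, a contradiction. No uniform $m$ and no diameter estimate is needed. With this repair your argument is just a proof of the cited lemma in this instance.

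\textbf{Minimality.} ``Irrational directions'' is not by itself a proof of minimality. What is needed is that $\Z[\xi]+\Lambda$ is dense in $\C$. This holds because, modulo $\Z[\xi]$, the generators $\phi^2+\xi$ and $\xi(\phi^2+\xi)$ of $\Lambda$ reduce to $\phi$ and $\xi\phi$.
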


\begin{proof}
    The fact that the partition $\Plimit$ gives a symbolic representation of 
    $\dynsys{\Z^2}{\Rhat}{\C/\Lambda}$ follows from
    \cite[Lemma~3.4]{MR4213162} because
    $\dynsys{\Z^2}{\Rhat}{\C/\Lambda}$ is a minimal dynamical system, and
    there exists an atom of the partition $\Plimit$ which is
    invariant only under the trivial translation in $\C/\Lambda$.

    By definition, $\Omega_\text{hat}$ is a shift of finite type.
    The subset $\Omega_\text{hat}^+$ is also a shift of finite type because
    it can be described by additional forbidden patterns forcing the hat with
    negative orientations not to be adjacent.
    From Theorem~\ref{thm:equality-holds-we-describe-all-tilings},
    we conclude that the $2$-dimensional subshift $\Xcal_{\Plimit,\Rhat}$ 
    is a shift of finite type. Thus, $\Plimit$ is a Markov partition for the
    dynamical system $\dynsys{\Z^2}{\Rhat}{\C/\Lambda}$.
\end{proof}

\subsection{Final remarks}

\begin{remark}
    As a consequence of Theorem~\ref{thm:equality-holds-we-describe-all-tilings}
    and Lemma~\ref{lem:language-at-z-and-z+1},
    the following hat-hat patterns
\[
\begin{tikzpicture}[scale=.65,baseline={(0,0)}] \smithsmall{0}{0}{+3}\smithsmall{1}{0}{+6} \end{tikzpicture},\quad
\begin{tikzpicture}[scale=.65,baseline={(0,0)}] \smithsmall{0}{0}{+5}\smithsmall{1}{0}{+2} \end{tikzpicture}
\]
and the following hat-antihat patterns
\[
\begin{tikzpicture}[scale=.65,baseline={(0,0)}] 
\smithsmall{0}{0}{+3}\smithsmall{1}{0}{-1}
\end{tikzpicture},\quad
\begin{tikzpicture}[scale=.65,baseline={(0,0)}] 
\smithsmall{0}{0}{+4}\smithsmall{1}{0}{-2}
\end{tikzpicture},\quad
\begin{tikzpicture}[scale=.65,baseline={(0,0)}] 
\smithsmall{0}{0}{+4}\smithsmall{1}{0}{-6}
\end{tikzpicture},\quad\text{ and }\quad
\begin{tikzpicture}[scale=.65,baseline={(0,0)}] 
\smithsmall{0}{0}{+5}\smithsmall{1}{0}{-1}
\end{tikzpicture},
\]
do not belong to the language of 
$\Omega_\text{hat} =\Omega_\text{hat}^+ \cup \Omega_\text{hat}^-$.
It is an interesting exercise to show that they
do not admit arbitrarily large surroundings by isometric copies of the hat.
Moreover, we may deduce from the same two results 
that a configuration $w\in\Omega_\text{hat}$ belongs to
the positive component $\Omega_\text{hat}^+$ if and only if it contains one
of the following patterns:
\[
    \begin{tikzpicture}[scale=.65,baseline={(0,0)}] \smithsmall{0}{0}{+4}\smithsmall{1}{0}{-5}\end{tikzpicture},\quad
    \begin{tikzpicture}[scale=.65,baseline={(0,0)}] \smithsmall{0}{0}{+5}\smithsmall{1}{0}{-2}\end{tikzpicture},\quad
    \begin{tikzpicture}[scale=.65,baseline={(0,0)}] \smithsmall{0}{0}{+5}\smithsmall{1}{0}{-3}\end{tikzpicture}.
\]
\end{remark}

The hat shape is only one member of the family of aperiodic shape described in
\cite{MR4770585}. Thus, many questions remain open.

\begin{question}
    \label{question:markov-for-hat-turtle-family}
    Find Markov partitions, if they exist, describing other members of the hat
    family of aperiodic tilings described in \cite{MR4770585} including tilings
    by the turtle shape. 
\end{question}

We note that after an initial draft of this paper was circulated,
Pieter Mostert quickly found such a partition for the turtle tilings, for
``turtle in hat'' tilings and for ``hat in turtle'' tilings.

\begin{question}
    \label{question:markov-for-spectre-caspr}
    Find Markov partitions, if they exist, describing tilings by the Spectre
    \cite{MR4807152} and the CASPr tilings
    \cite{zbMATH08105556,baake_long-range_2025}.
\end{question}

The partition $\Plimit$ is the first example of a Markov partition for a
$\Z^2$-action on a torus having fractal boundaries. Previous Markov partitions
for $\Z^2$-actions on a torus were all polygonal \cite{MR4213162,MR4963140}.
We believe that the fractal nature of the partition comes from the interaction
of two distinct quadratic number fields $\Q(\sqrt{5})$ and $\Q(\sqrt{3})$
in the description of the substitutive structure of hat tilings.
In general, the characterization of Markov partitions for $\Z^2$-action on a
torus and their fractal vs.\@ smooth (linear) boundaries remains an open question.

\begin{question}
    Characterize the nature (fractal or smooth) of the boundary of the Markov
    partition of a toral $\Z^2$-action. In particular, when the boundary is smooth,
    is it always piecewise linear as for hyperbolic toral automorphisms
    \cite{MR1145614}?
\end{question}

    The partition $\Plimit$ is a partition of the internal space
    of a cut and project scheme \cite{MR3136260}
    which is \emph{degenerate} \cite[Section 7]{labbe:tel-05138330}.
    Degenerate cut and project schemes 
    do not obey the standard hypothesis
    of injectivity of the projection of some higher dimensional lattice
    to the physical space.
    Their introduction was motivated by the description 
    of occurrences of patterns in Jeandel-Rao aperiodic tilings \cite{MR4213162}.
    As for hat tilings whose tiles are placed on the lattice $\Z[\xi]$, 
    Jeandel-Rao tilings are made of unit square Wang tiles placed on the periodic lattice $\Z^2$.
    To keep this article short and accessible, we do not include the description of the
    degenerate cut and project scheme here.

\section*{Acknowledgements}

The authors are thankful to Michael Baake 
for helpful discussions about the minimality of $\Omega_\text{hat}^+$ 
and Craig Kaplan for comments on an earlier draft.

The authors acknowledge support of the Institut Henri Poincaré (UAR 839
CNRS-Sorbonne Université), and LabEx CARMIN (ANR-10-LABX-59-01).
This work was presented for the first time at Institut Henri Poincaré (Paris,
March 26th, 2026) during the semester \emph{Illustration as a mathematical
research technique}.

The first author acknowledges Université de Bordeaux's program
``\textit{Mobilité internationale des personnels de recherche}''
partially supporting a one-year stay at CRM-CNRS in Montréal (2025-2026).

This work was partly funded from France's Agence Nationale de la Recherche
(ANR) project IZES (ANR-22-CE40-0011).
It was also supported by grants from the
\emph{Symbolic Dynamics and Arithmetic Expansions}
(SymDynAr) Project, co-funded by ANR (ANR-23-CE40-0024)
and FWF (\href{https://dx.doi.org/10.55776/I6750}{I 6750}),
the Austrian Science Fund, as well as the Natural Sciences and Engineering
Research Council of Canada (NSERC).

\bibliographystyle{abbrv}
\bibliography{biblio}

\end{document}